\def\1{{\bf 1}}
\newtheorem{theorem}{Theorem}[section]
\newtheorem{proposition}[theorem]{Proposition}
\newtheorem{remark}{Remark}
\newtheorem{definition}[theorem]{Definition}
\newtheorem{lemma}[theorem]{Lemma}
\newcommand\R{\mathbb{R}}
\newcommand\RR{\mathbb{R}^N}
\newcommand\lapal{(-\Delta)^{\alpha}}
\newcommand\intr{\int_{\mathbb{R}^N}}
\newcommand\m{m}  
\newcommand\mb{\overline{m}}
\newcommand{\RN}{\mathbb{R}^N}
\newcommand\s{s}  
\newcommand\SqLop{ \left( (-\Delta)^{-1}\mathcal{L}^{1-s}_\epsilon \right)^{\frac{1}{2}} }
\newcommand\x{\overline{x}}
\newcommand\ula{u_\lambda}
\newcommand\betau{\overline{\beta}}
\newcommand\alphau{\overline{\alpha}}
\newcommand{\mf}{q}   
\newcommand{\uf}{w}   
\newcommand{\sfp}{\sigma}   
\begin{document}

\title{ Porous medium equation with nonlocal pressure}

\author{\large \bf \centerline{Diana Stan, F\'elix del Teso, and Juan Luis V\'azquez}}

\maketitle
\begin{center}
\emph{\footnotesize Dedicated to Profs. Haim Brezis and Louis Nirenberg with deep admiration}
\end{center}

\

\abstract{We provide a rather complete description of the results obtained so far on the nonlinear diffusion equation $u_t=\nabla\cdot (u^{m-1}\nabla (-\Delta)^{-s}u)$, which describes a flow through a porous medium driven by a nonlocal pressure. We consider  constant parameters  $m>1$ and $0<s<1$,  we assume that  the solutions are non-negative,  and the problem is posed in the whole space.
We present a theory of existence of solutions, results on uniqueness,  and relation to other models.  As new results of this paper, we prove the existence of self-similar solutions in the range when $N=1$ and $m>2$, and the asymptotic behavior of solutions when $N=1$. The cases $m = 1$ and
$m = 2$ were rather well known.
}

\vspace{1.3cm}

\noindent\textbf{Keywords}: Nonlinear fractional diffusion, fractional Laplacian,  existence of weak solutions, energy estimates, speed of propagation, smoothing effect, asymptotic behavior.
\vspace{0.3cm}

\noindent 2000 {\sc Mathematics Subject Classification.}
26A33, 
35K65, 
76S05. 

\small
\vspace{1.5cm}

\noindent \textbf{Addresses:}\\
Diana Stan, {\tt dstan@bcamath.org}, Basque Center for Applied Mathematics, Basque-Country, Spain. \\
 F\'{e}lix del Teso, {\tt felix.delteso@ntnu.no}, Norwegian University of Science and Technology, Trondheim, Norway. \\
  Juan Luis V{\'a}zquez, {\tt juanluis.vazquez@uam.es}, Departamento de Matem\'{a}ticas, Universidad Aut\'{o}noma de Madrid, Madrid, Spain.

\

\newpage
\tableofcontents
\newpage

\section{Introduction}

The study of evolution equations associated to dissipative operators was intensely pursued in the second half of the 20th century. It took the form of the abstract equation
\begin{equation*}
\frac{du}{dt}= A(u)+f\,,
\end{equation*}
where $u$ is a time function with values in a Hilbert or Banach space $X$ and $A$ is typically an unbounded linear or nonlinear operator with strict dissipative conditions, the simplest case being the Laplacian operator acting on $L^2(\Omega)$. Famous theorems were proved that widely extended what was known for the heat equation and then for parabolic equations. The first results dealt with the case where the  underlying functional space $X$ was a Hilbert space, \cite{BrBk73, BrBkFA, Yosida}, and then the theory applied to Banach spaces, \cite{Nir63, CL71, Ev78}. The aim of the general theory was to construct the corresponding semigroups (for $f=0$) or flows (for general $f$) with detailed properties \cite{Gold85, LadBk, Lunardi, PazyBk}.

\medskip

\noindent {\bf I.} It was soon realized that  a general theory was bound to be too rich in details and difficulties, and this led to concentrating the attention on particular equations with a relevant physical interest and significant novel properties, \cite{BrBk71, LiMag}. One of these equations was the porous medium equation, PME for short,
\[
\partial_t u=\Delta(u^m)=\nabla( mu^{m-1}\nabla u), \quad m>1,
\]
which had received much attention from the Russian school in the 1950's and 60's, \cite{B52, OKC}, and was taken up in the abstract general setting in the 70-80's, \cite{BeTh72, BBC75}. This equation is a relevant model for heat propagation with finite propagation speed. As a consequence of such property, interesting geometry occurs in the form free boundaries (interfaces), an issue that originated a great amount of mathematical analysis \cite{CaFr79, CaFr80, CVW87, Koch99, KKochV}.
Work done for several decades led to considerable success and a rather complete theory was formulated, cf. \cite{VazBkPME}. The PME generates a nonlinear contraction semigroup in $X=L^1(\Omega)$, $\Omega\subset \RR$ with definite regularity properties.

 This led to extensions like the fast diffusion equation where $m<1$, the $p$-Laplacian equation, $\partial_t u=\Delta_p(u)=\nabla(|\nabla u|^{p-2}\nabla u)$, $1<p<\infty$,  \cite{diBen93, VazBk06},  and several other equations in the areas of Nonlinear Diffusion and Reaction-Diffusion. Much attention was also given to the Stefan Problem, very important for its mathematics and its applications, \cite{CaffEvans, CaffSa05, diBen93, KamStefan, Meir}.  For a general survey paper see \cite{VazCIME}.

\medskip

\noindent {\bf II.} A decade ago  there arose the interest in combining the PME nonlinear mechanism with
 nonlocal operators so as to take into account anomalous diffusion effects, the main examples of such operators being the so-called fractional Laplacian operators. This trend has been a very active area of research since then. The main problem to be treated was the Porous Medium Equation with fractional pressure formulated by Caffarelli and V\'azquez as
 \begin{equation}\label{CV}
\partial_t u=\nabla(u\nabla (-\Delta)^{-s}u),
\end{equation}
for $0<s<1$,  having in mind models in statistical mechanics, see \cite{GL1, GL2} that deal with  the macroscopic evolution of interacting gas systems. In \cite{CVf1} existence of weak solutions is proved for initial data $u_0 \in L^1\cap L^\infty$  via an approximation method that requires a suitable decay of the data. On the other hand, the 1D model was investigated by Biler et al. in \cite{BilerKarchMonneau} having an application to dislocation theory, \cite{Head}.

Uniqueness is  a key issue for this model. It holds for suitable solutions in 1D as  proved in \cite{BilerKarchMonneau}. However, uniqueness of weak or better solutions is not known so far in several dimensions, except  locally in time for good data  (see \cite{ZXC} for Besov spaces).  The lack of comparison principle is also an important issue to deal with.

A quite important, and rather surprising, feature of the equation is the property of finite speed of propagation. This property was not evident since fractional operators are known to imply very fast propagation. and could overrun the PME nonlinearity, but the nonlinearity wins in this case. The property was proved \cite{CVf1} by comparison with special type of barrier functions, called by the authors true supersolutions. H\"{o}lder regularity of solutions is proved in \cite{CSV,CVf3}. The large time asymptotic behavior of weak solutions of \eqref{CV} is given by a unique fundamental solution constructed via an obstacle type problem (the proof is  given in \cite{CVf2}). A suitable entropy function is constructed to prove the uniqueness of the self-similar solution and the asymptotic behavior.
 Refined asymptotics was done in \cite{CarrilloHuangVazquez}. Gradient flow methods are also an alternative to prove existence of solutions, see \cite{Lisini}. However, because of the lack of a uniqueness theory, the constructed weak solutions of \cite{CVf1} and gradient flow solutions (\cite{Lisini}) might be different.

\medskip

\noindent {\bf III.} Extending model \eqref{CV} to general exponents, as in the standard PME, is natural and motivates the model we study here:
\begin{equation}\tag{M1}
 \partial_t u = \nabla \cdot (u^{m-1} \nabla (-\Delta)^{-s}u) \qquad x\in\R^N, \ t>0.
\end{equation}
The extension also agrees with the generality of the models proposed in \cite{GL1, GLM2000}. We are specially interested in better understanding well-posedness and velocity of propagation.
It turns out that this equation has quite interesting properties, some of them are inherited from \eqref{CV} (finite speed of propagation for $m\ge 2$) and therefore some techniques of the proofs can be successfully adapted, but many other different properties were discovered, like infinite speed of propagation for $m \in (1,2)$. In what follows we describe this last model with its main properties that have been obtained so far: existence of solutions in the general setting of finite measure data (therefore this extends the result of \cite{CVf1} for \eqref{CV}), the transition finite-to-infinite speed of propagation, uniqueness in dimension one, and so on. Moreover we prove new results like existence of selfsimilar solutions for $m\geq2$ and $N=1$, the asymptotic behavior for $m>1$ and $N=1$ and partial results on these topics in higher dimensions.

\medskip

\noindent {\bf IV. }  Equations with two nonlinearities of the form
\[
\partial_t u = \nabla \cdot (u^{a-1} \nabla (-\Delta)^{-s}u^{b-1}) \quad \tag{GM}\label{GM}
\]
have also been considered in the literature. We refer to \cite{StTeVa15} for construction of self-similar solutions when $a>1$, $b>1$ and transformation formulas between self-similar solutions of \eqref{GM}, \eqref{eq:maineq} and \eqref{FPME}.

Dolbeault and Zhang (\cite{DolbeaultZhang}) proved that for $a-1=\frac{1}{2}$, $b<\frac{3}{2}$, self-similar solutions are not optimal for the Gagliardo-Nirenberg-Sobolev inequalities, in strong contrast with usual standard fast
diffusion equations based on non-fractional operators (see \cite{DelPinoDolb}). Their approach is mainly based on entropy methods.

Important related work is due to Biler, Imbert and Karch. In \cite{BilerImbertKarch1, BilerImbertKarch} they considered the model
  \begin{equation}\label{eq:biler}
    \partial_t u = \nabla \cdot (|u| \nabla (-\Delta)^{-s}|u|^{b-2}u),
  \end{equation}
  corresponding to $a=2$, $b>1$ in \eqref{GM}. They prove existence of a changing-sign weak solution and its main properties. Moreover, they find explicit self-similar solutions with compact support. In a later work \cite{Imb16}  finite speed of propagation is established for general positive solutions.

\section{Presentation of the model}
We consider the initial value problem
\[
   \left\{ \begin{array}{ll}
  \partial_t u = \nabla \cdot (u^{m-1} \nabla (-\Delta)^{-s}u)    &\text{for } x \in \RN, \, t>0,\\[2mm]
  u(0,x)  =u_0(x) &\text{for } x \in \RN,
    \end{array}  \right. \tag{M1}\label{eq:maineq}
\]
for $u=u(x,t)\geq0$, exponents $m > 1$, $0<s<1$, and space dimension $N\geq 1$.

This model has been studied in the series of papers \cite{StTeVa14,StTeVa15,StTeVa16,StTeVa17} by the present authors. Many properties of the solutions were proved and there is of course  work to be done. In this paper  we report on the results obtained so far and also we make a step further in the theory of this model by describing the asymptotic behavior in dimension $N=1$.

First, we introduce the notion of weak solution in the very general context of  measures as initial  data.
It applies for all $m\in (1,+\infty)$ and $s\in (0,1)$. We denote by $\mathcal{M}^{+} (\RN)$ the set of nonnegative Radon measures.

\begin{definition}\label{def1}
We say that $u\ge0$  is a weak solution of Problem \eqref{eq:maineq} with initial data $\mu\in  \mathcal{M}^{+} (\RN)$ \ if : \ $u \in L^1_{\textup{loc}}(\RN \times (0,T))$,  $\nabla (-\Delta)^{-s}u \in L^1_{\textup{loc}}(\RN \times (0,T))$, $u^{m-1} \nabla (-\Delta)^{-s} u\in L^1_{\textup{loc}}(\RN \times (0,T))$, and
\begin{equation*}
\int_0^T\int_{\RN} u \phi_t\,dxdt-\int_0^T\int_{\RN}  u^{m-1} \nabla (-\Delta)^{-s} u \cdot\nabla \phi \,dxdt+  \int_{\RN} \phi(x,0) d \mu(x)=0,
\end{equation*}
for all test functions  $\phi \in C^1_c(\RN \times [0,T))$.
\end{definition}
\begin{remark}
Note that, if $\mu = u_0\in L^1_{\textup{loc}}(\R^N)$, then $d \mu(x)=u_0(x)dx$  and
\[\int_{\RN} \phi(x,0) d \mu(x)=\int_{\RN} u_0(x)\phi (x,0) dx,\]
thus, the initial   datum is taken in the usual sense (as initial trace).
\end{remark}

In what follows we will present the main results on Problem \eqref{eq:maineq}. In Section \ref{Section:RelatedModels} three different diffusion models are introduced and we show their relation to Problem \eqref{eq:maineq} and the consequences of this transformation on the qualitative properties of solutions to Problem \eqref{eq:maineq}. Section \ref{sec:inteq} is devoted to the integrated version of \eqref{eq:maineq}. In Section \ref{Section:Main} we state general results on existence of solutions, velocity of propagation. Uniqueness and asymptotic behavior are also stated, but only in dimension 1. Finally the proofs are given in Section \ref{Section:Proofs}.

\section{Related  models and transformations}\label{Section:RelatedModels}

The study of some properties of  Problem \eqref{eq:maineq} is made more difficult because  the comparison principle does not hold and no proof of uniqueness of weak solutions in dimension higher than one is known. This motivates us to search for a connection with other fractional diffusion models that allow a more friendly approach. We present here some models of fractional diffusion  connected to Problem \eqref{eq:maineq} via useful transformations at the self-similar level.

\subsection{The Fractional Porous Medium Equation}

An alternative fractional version of the standard or local Porous Medium Equation, $\uf_t=\Delta \uf^{\mf}$, is given by the following equation
\[
\uf_t+(-\Delta)^{\sfp} \uf^{\mf}=0, \quad \tag{FPME}\label{FPME}
\]
called the \emph{Fractional Porous Equation}  in the literature (we call the exponent $q$ instead of the usual $m$ for convenience in later comparisons). For ${\mf}=1$ and $0<{\sfp}<1$ this leads to the linear Fractional Heat Equation, for which we refer to the survey \cite{BSV17} and also \cite{Vaz2017}. Note that \eqref{FPME} corresponds to the general model \eqref{GM} when $a=1$, $b=q+1$ and $s=1-\sigma$.

In recent years the theory for this model has been widely developed:  the existence, uniqueness and continuous dependence of solutions of the Cauchy problem \eqref{FPME} for all ${\mf}>0$ and $0<{\sfp}<1$ have been proved by De Pablo, Quir\'os, Rodr\'iguez and V\'azquez in \cite{DPQuRoVa11,DPQuRoVa12}.

 The \eqref{FPME} model inherits some of the properties of the classical PME. Using the Caffarelli-Silvestre
extension method and the B\'enilan-Brezis-Crandall functional semigroup approach, a
weak energy solution is constructed, and $u \in C([0, \infty) : L^1(\RN))$. Moreover, the set of
solutions forms a semigroup of ordered contractions in $L^1(\RN)$.

An important property of \eqref{FPME}, which does not hold in its non-fractional version,  is the\emph{ infinite speed of propagation}: assume ${\sfp}\in (0,1)$, ${\mf}>({\mf})_c=(N-2{\sfp})_+/N$. Then for non-negative initial data $u_0 \ge 0$ such that $\int_{\RN} u_0 (x)\,dx<\infty$, there exists a unique solution ${\uf}(x,t)$ of problem \eqref{FPME} satisfying ${\uf}(x,t)>0$ for all $x\in \mathbb{R}^N$, $t>0$. Moreover, there is conservation of mass $\int \uf(x,t)\,dx= \int \uf_0(x)\,dx$  for all $t>0$.
The solutions are $C^\alpha$ continuous as proved in \cite{VaPQR2017}. More general diffusions were considered in \cite{DPQuRoVa14}. We refer to the survey \cite{VazSurvey2014} for a complete description of the \eqref{FPME}.

\noindent\textbf{Asymptotic behavior and self-similarity for FPME.} The large time behaviour of such solutions is described by the self-similar solutions  with finite mass (Barenblatt solutions) constructed in \cite{Vaz14}, which have the form
$$
\uf(x,t)=t^{-N\beta_1}\phi_1(y), \quad y=x\,t^{-\beta_1}\,,
$$
where $\beta_1=1/(N(\mf-1)+2{\sfp})$ and the profile function $\phi_1$ satisfies the following equation
\begin{equation}\label{prof1}
(-\Delta)^{\sfp}  \phi_1^\mf=\beta_1 \nabla \cdot(y \,\phi_1).
\end{equation}
 These solutions are well defined for  $\mf>(\mf)_c$, where $\beta_1$ is positive. The profile $\phi_1(y)$ is a smooth and positive radial function in $\RN$, it is monotone decreasing in $r=|y|$ and has a polynomial decay rate as $|y|\to \infty$ depending on the exponent $\mf$.

\subsection{Self-similar solutions for Problem \eqref{eq:maineq} when $m<2$}

In a previous work \cite{StTeVa15} we have established three main types of self-similar solutions for model \eqref{eq:maineq} depending on the range of the parameter $\m$, but always restricted to the range $m<2$. The first type are functions that are positive for all times, while the second type are functions that extinguish in finite time, both separated by a transition type. We briefly present these solutions here with the purpose of relating model \eqref{eq:maineq} with its alternative \eqref{FPME}. The rigorous computations and the derivation of the formulas are written in \cite{StTeVa15}.

\medskip

\noindent  \textbf{Self-similarity of first type. Solutions that exist for all positive times.}
A self-similar solution $V(x,t)$ of the first type to equation \eqref{eq:maineq} conserving mass is given by
\begin{equation*}
V(x,t)=t^{-\alpha_2}\phi_2(y), \quad y=x\,t^{-\beta_2}
\end{equation*}
with $\alpha_2=N\beta_2$  and $\beta_2=1/(N(\m-1)+2-2\s)$, and with profile function $\phi_2$ satisfying the equation
\begin{equation}\label{prof2}
\nabla \cdot(\phi_2^{\m-1}\,\nabla(-\Delta)^{-\s}\phi_2 )=-\beta_2 \nabla \cdot(y \,\phi_2).
\end{equation}

These solutions  are considered in the range of parameters where $\beta_2>0$, that is, for $\m>(N-2+2\s)/N$.

\medskip

\noindent \textbf{Self-Similarity of second type. Extinction in finite time.}
These are solutions to equation \eqref{eq:maineq} with the self-similar form
\begin{equation*}
V(x,t)=(T-t)^{\alphau_2} \psi_2 \left(y\right), \quad y=x(T-t)^{\betau_2}.
\end{equation*}
where $\alphau_2=N\betau_2,  \, \betau_2=1/(N(1-\m)+2\s-2).$
The profile $\psi_2$ satisfies the equation
\begin{equation}\label{prof5}
\nabla \cdot(\psi_2^{\m-1}\nabla (-\Delta)^{-\s}\psi_2)=\nabla \cdot(y \psi_2).
\end{equation}
Here $\betau_2=-\beta_2$, where $\beta_2$ is the self-similarity exponent of first type. We argue now in the range of parameters where $\betau_2>0$, that is $\m < (N-2+2\s)/N$.

\medskip

\noindent $\bullet$ \textbf{Self-Similarity of third type. Eternal solutions.}
 For $\m\to(N-2+2\s)/N$ there is a class of self-similar solutions to equation \eqref{eq:maineq} conserving mass of the form
\begin{equation*}
V(x,t)=e^{-c \, t}F(y), \quad y=xe^{-c\, t} ,
\end{equation*}
where $c>0$ is a free parameter (exponential self-similarity, which usually plays a transition role) and
$F$ is a solution to the profile equation
\begin{equation}\label{prof6}
\nabla \cdot(F^{\m-1}\nabla(-\Delta)^{-\s}F)=-c\nabla \cdot(y F ).
\end{equation}

\begin{remark} Solutions of this type live backward and forward in time, they are eternal. Notice that in this borderline  case  $\m\to(N-2+2\s)/N$ we have $1/\beta_2=1/\betau_2\to0$, and therefore self-similar solutions of the first and second type do not apply here.
\end{remark}

\noindent $\bullet$ {\bf The transformation.} In \cite{StTeVa15} we found an unexpected relationship that allows to transform the families of mass-conserving self-similar solutions of models \eqref{FPME} and \eqref{eq:maineq} into each other, if suitable parameter ranges are prescribed. Actually, there exists a precise correspondence between the profiles $\phi_1$ and $\phi_2$, $\psi_2$ or $F$, and the parameters $\mf$ and $\m$, as well as  $\sfp$ and $\s$.

\begin{theorem}Let $\mf> \frac{N-2{\sfp}}{N}$, $s\in (0,1)$ and let $\phi_1\ge 0$ be a solution to the profile equation \eqref{prof1}. The following holds:

{\rm (i)} If $\mf \in ( \frac{N}{N+2{\sfp}}, \infty)$ then
\begin{equation*}
\phi_2(x)=\left(\beta_1/\beta_2 \right)^{\frac{\mf}{1-\mf}} (\phi_1(x))^\mf
\end{equation*}
is a solution to the profile equation \eqref{prof2} if we put $\m= \frac{2\mf-1}{\mf}$ and $\s=1-{\sfp}$. This corresponds to $m \in ( \frac{N-2+2\s}{N},2)$, $\s\in (0,1)$.

{\rm (ii)} If $\mf \in  (\frac{N-2{\sfp}}{N}, \frac{N}{N+2{\sfp}})$    then
\begin{equation*}
\psi_2(x)=\left(\beta_1/\beta_2 \right)^{\frac{\mf}{1-\mf}} (\phi_1(x))^\mf
\end{equation*}
is a solution to the profile equation \eqref{prof5} if we put $\m=\frac{2\mf-1}{\mf}$ and $\s=1-{\sfp}$. This corresponds to $m \in ( \frac{N-4+4\s}{N-2+2  \s}, \frac{N-2+2\s}{N})$, $\s\in (0,1)$.

{\rm (iii)} If $\mf= \frac{N}{N+2{\sfp}}$ then
\begin{equation*}
F(x)=\left(\beta_1/c \right)^{\frac{N}{2{\sfp}}} (\phi_1(x))^{\frac{N}{N+2{\sfp}}}
\end{equation*}
is a solution to the profile equation \eqref{prof6} if we put $\m= \frac{N-2+2\s}{N}$ and $\s=1-{\sfp}$.

\end{theorem}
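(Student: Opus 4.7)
The proof of each of (i)--(iii) is a direct substitution: plug the ansatz $\phi_2=C\phi_1^{\mf}$ (or the analogous one with $\psi_2$ or $F$) into the target profile equation and reduce it, via \eqref{prof1}, to an algebraic identity in $\phi_1$. The whole transformation hinges on a single identity that converts the nonlocal pressure $\nabla(-\Delta)^{-s}\phi_1^{\mf}$ into a purely local vector field; this identity works precisely because the Barenblatt profile is radial.

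\textbf{The key radial identity.} The Barenblatt profile $\phi_1$ is smooth and radial, so setting $\Psi(r)=\int_0^r \rho\,\phi_1(\rho)\,d\rho$ one checks $\nabla\Psi(|y|)=y\phi_1(|y|)$, i.e., $y\phi_1$ is a gradient. Hence $\nabla\cdot(y\phi_1)=\Delta\Psi$, so $(-\Delta)^{-1}\nabla\cdot(y\phi_1)=-\Psi$, and taking one more gradient
\[
\nabla(-\Delta)^{-1}\nabla\cdot(y\phi_1)=-y\phi_1.
\]
Because $s=1-\sfp$ one has $(-\Delta)^{-s}=(-\Delta)^{-1}(-\Delta)^{\sfp}$; combining this with \eqref{prof1} and the identity above,
\[
\nabla(-\Delta)^{-s}\phi_1^{\mf}=\beta_1\,\nabla(-\Delta)^{-1}\nabla\cdot(y\phi_1)=-\beta_1\,y\phi_1,
\]
which eliminates all trace of the nonlocal operator on $\phi_1^{\mf}$.

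\textbf{Verification of (i).} Writing $\phi_2=C\phi_1^{\mf}$ and applying the displayed identity,
\[
\phi_2^{m-1}\nabla(-\Delta)^{-s}\phi_2 = -C^m\beta_1\,\phi_1^{\,\mf(m-1)+1}\,y.
\]
For the power of $\phi_1$ to match that of $\phi_2=C\phi_1^{\mf}$ we need $\mf(m-1)+1=\mf$, equivalent to $m=(2\mf-1)/\mf$. Under this condition \eqref{prof2} reduces to the pointwise identity
\[
-C^m\beta_1\,\phi_1^{\mf}\,y = -\beta_2\,C\,\phi_1^{\mf}\,y,
\]
whose balance gives $C^{m-1}=\beta_2/\beta_1$. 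Using $1/(m-1)=\mf/(\mf-1)=-\mf/(1-\mf)$, this is exactly $C=(\beta_1/\beta_2)^{\mf/(1-\mf)}$. The relation $m=(2\mf-1)/\mf$ maps $\mf\in(N/(N+2\sfp),\infty)$ bijectively onto $m\in((N-2+2s)/N,2)$.

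\textbf{Cases (ii), (iii) and main obstacle.} Cases (ii) and (iii) are proved by the same substitution and the same exponent identity $m=(2\mf-1)/\mf$. In (ii) the constant $\beta_2$ is negative (so $\betau_2=-\beta_2>0$) and \eqref{prof5} is recovered after absorbing this sign; in (iii) the self-similarity exponent degenerates at $\mf=N/(N+2\sfp)$, and the free scaling parameter $c$ takes the place of $\beta_2$, yielding \eqref{prof6} and the stated value $C=(\beta_1/c)^{N/(2\sfp)}$. The main technical point of the proof is to justify rigorously the identity $\nabla(-\Delta)^{-1}\nabla\cdot(y\phi_1)=-y\phi_1$ and the composition $(-\Delta)^{-1}(-\Delta)^{\sfp}=(-\Delta)^{-s}$ on the function $\phi_1^{\mf}$. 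This requires sufficient decay of $\phi_1$ at infinity so that $\Psi$ and the Riesz potentials are defined by convergent integrals and the Fubini/commutation steps are legitimate; this is supplied by the pointwise decay and smoothness of the Barenblatt profile established in \cite{Vaz14}.
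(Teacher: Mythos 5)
Your strategy is the right one, and it is essentially the approach this paper uses: the paper itself does not reprove this theorem (it refers to \cite{StTeVa15} for the computations), but its proof of the companion result, Lemma \ref{lem:trans2}, proceeds exactly as you do --- pass from the divergence-form profile equation to its ``integrated'' pointwise form (legitimate for radial decaying profiles, which you justify more explicitly than the paper does via the radial primitive $\Psi$ and the identity $\nabla(-\Delta)^{-1}\nabla\cdot(y\phi_1)=-y\phi_1$), factor $(-\Delta)^{-\s}=(-\Delta)^{-1}(-\Delta)^{\sfp}$ using $\s=1-\sfp$, and match exponents and constants. Your verification of case (i) is complete and correct: the exponent balance $\mf(\m-1)+1=\mf$ gives $\m=(2\mf-1)/\mf$, the constant balance gives $C^{\m-1}=\beta_2/\beta_1$, and the range of $\m$ is computed correctly.

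The genuine gap is case (ii), which you dismiss with ``absorbing this sign'' --- but that is precisely where the bookkeeping is delicate and where your computation, taken literally, does not close. In the range $\mf\in(\frac{N-2\sfp}{N},\frac{N}{N+2\sfp})$ one has $\beta_2<0$ while $\beta_1>0$, so two things need attention. First, $(\beta_1/\beta_2)^{\mf/(1-\mf)}$ is a non-integer power of a negative number, so the constant must be reinterpreted (e.g.\ through $\betau_2=-\beta_2>0$) before the amplitude balance $C^{\m-1}=\beta_2/\beta_1$ even makes sense for a positive profile. Second, your substitution produces $\nabla\cdot(\psi_2^{\m-1}\nabla(-\Delta)^{-\s}\psi_2)=-C^{\m-1}\beta_1\,\nabla\cdot(y\psi_2)$, and since $C^{\m-1}>0$ and $\beta_1>0$ this coefficient is negative; matching it to the unit coefficient in \eqref{prof5} as stated is therefore not a matter of ``absorbing a sign'' but requires identifying the normalization of $\psi_2$ implicit in \eqref{prof5} (an amplitude rescaling $\psi_2\mapsto k\psi_2$ with $k>0$ changes the coefficient by the positive factor $k^{1-\m}$ and cannot flip its sign). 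You need to carry out the constant-matching in case (ii) explicitly, with $\betau_2$ in place of $\beta_2$, to confirm that the stated formula for $\psi_2$ is the correct one. Case (iii), by contrast, really is the identical computation with $\beta_2$ replaced by $c$, since \eqref{prof6} has the same form as \eqref{prof2} and $\mf/(1-\mf)=N/(2\sfp)$ at $\mf=N/(N+2\sfp)$, so your one-line treatment there is acceptable.
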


This transformation is an algebraic relation that has important consequences. It provides a proof for the existence of self-similar solutions to Problem \eqref{eq:maineq} and their characterization.
\normalcolor Note that $m<2$. \normalcolor

Self-similar solutions of equation \eqref{FPME} can be also constructed  for smaller values of $\mf$: these are very singular solutions that extinguish in finite time (see V\'azquez \cite{Vaz14}) or blow up in finite time (see V\'azquez and Volzone \cite{VazVol2015}).
 These ones can be transformed in a similar manner to corresponding self-similar solutions of the model \eqref{eq:maineq}. Rigorous proofs with complete computations can  be found in \cite{StTeVa15}. Self-similar solutions do not have an explicit formula, except very particular cases of exponents $m=m(s)$  explicitly computed by Huang in  \cite{Huang2013}.

\subsection{Related models for $m>2$}

In this section we will derive (in a formal way) a transformation formula between self-similar solutions of model \eqref{eq:maineq} and two new nonlocal problems. First, we will show that when $m>2$, self-similar solutions of \eqref{eq:maineq} have a correspondence to self-similar solutions of
 \begin{equation}\label{eq:model2}
 v_t + v^2 (-\Delta)^{1-s}v^{\mb}=0, \quad x\in \RN,\ t>0.
 \end{equation}
with $\mb=1/(m-2)$. The algebraic change $v=1/w$ maps solutions of  problem \eqref{eq:model2} to solutions of
\begin{equation*}
w_t - (-\Delta)^{1-s} w^{-\mb}=0.
\end{equation*}
which corresponds to solutions of \eqref{FPME} for negative exponents $q$ (it is known this equation does not admit integrable solutions, cf. \cite{BSegVaz}, therefore one must search for non-integrable solutions).

Self-similar solutions of Problem \eqref{eq:model2} are of the form
\begin{equation*}
V(x,t)=t^{-a}\psi (y), \quad y=x\,t^{b}
\end{equation*}
with
$$a=bN, \quad b=\frac{1}{N(\mb+1)+2(1-s)},$$
where $\psi$ satisfies the profile equation
\begin{equation}\label{eq:psi}
b(N \psi - y \nabla \psi) = \psi^2(-\Delta)^{1-s}\psi^{\mb}.
\end{equation}

\medskip

\begin{lemma}\label{lem:trans2}
  Let $m>2$ and let $\phi(x,t)$ a smooth solution  to the profile equation \eqref{prof2}. Let $\psi$ and $\mb$ defined by
  \begin{equation*}
  \phi= c \, \psi^{\mb}, \quad \mb=\frac{1}{m-2},
  \end{equation*}
  with $c=\left(\frac{\beta}{b}\right)^{1/(m-1)}$.
  Then  $\psi$ is a solution to the profile equation \eqref{eq:psi}.
\end{lemma}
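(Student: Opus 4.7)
The plan is a formal algebraic substitution of $\phi = c\psi^{\mb}$ into \eqref{prof2}, combined with one integration of the divergence and one re-differentiation that converts the operator $\nabla(-\Delta)^{-s}$ into $(-\Delta)^{1-s}$. The key algebraic coincidence that makes everything fit is that $\mb = 1/(m-2)$ satisfies $\mb(m-1) = 1+\mb$, so that
\[
\phi^{m-1} = c^{m-1}\psi^{\mb(m-1)} = c^{m-1}\psi^{\mb+1},
\]
which is the exponent that allows the substitution to divide out cleanly.

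First I would reduce \eqref{prof2} to a pointwise (first-order) identity. Rewriting \eqref{prof2} as $\nabla\cdot(\phi^{m-1}\nabla(-\Delta)^{-s}\phi + \beta y\phi)=0$, and recalling that self-similar Barenblatt-type profiles are radial, smooth and decaying, the bracketed vector field is a radial, divergence-free vector field on $\R^N$, hence identically zero:
\[
\phi^{m-1}\nabla(-\Delta)^{-s}\phi = -\beta y\phi.
\]
Substituting $\phi=c\psi^{\mb}$ and using $\phi^{m-1}=c^{m-1}\psi^{\mb+1}$ turns this into
\[
c^{m}\psi^{\mb+1}\nabla(-\Delta)^{-s}\psi^{\mb} = -\beta c y\psi^{\mb},
\]
and dividing by $c\psi^{\mb}$ gives the compact relation
\[
\nabla(-\Delta)^{-s}\psi^{\mb} = -\frac{\beta}{c^{m-1}}\,\frac{y}{\psi}.
\]

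Next I would apply $\nabla\cdot$ to both sides. On the left, the identity $\nabla\cdot\nabla(-\Delta)^{-s} = \Delta(-\Delta)^{-s} = -(-\Delta)^{1-s}$ yields $-(-\Delta)^{1-s}\psi^{\mb}$. On the right, the product rule gives
\[
-\frac{\beta}{c^{m-1}}\,\nabla\cdot\!\left(\frac{y}{\psi}\right) = -\frac{\beta}{c^{m-1}\psi^{2}}\bigl(N\psi - y\cdot\nabla\psi\bigr).
\]
Multiplying through by $-\psi^{2}$ and using the choice $c = (\beta/b)^{1/(m-1)}$, so that $\beta/c^{m-1}=b$, produces
\[
\psi^{2}(-\Delta)^{1-s}\psi^{\mb} = b\bigl(N\psi - y\cdot\nabla\psi\bigr),
\]
which is exactly \eqref{eq:psi}.

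The main obstacle, and the source of the ``formal'' label in the paper, is the first step: passing from the divergence form \eqref{prof2} to the pointwise identity $\phi^{m-1}\nabla(-\Delta)^{-s}\phi + \beta y\phi = 0$. It holds without issue for smooth radial profiles by the classification of radial divergence-free vector fields, but in the actual variational/obstacle construction of the profiles this would need to be reconciled with the regularity and symmetry class in which $\phi$ is produced. Once this reduction is granted, the remainder is purely algebraic thanks to the identity $\mb(m-1)=\mb+1$.
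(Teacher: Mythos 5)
Your proof is correct and follows essentially the same route as the paper's: both integrate the divergence form of \eqref{prof2} to the pointwise identity $\phi^{m-1}\nabla(-\Delta)^{-s}\phi=-\beta y\,\phi$, divide, apply $\nabla\cdot$ to convert $\nabla(-\Delta)^{-s}$ into $-(-\Delta)^{1-s}$, and use $\beta/c^{m-1}=b$ to land on \eqref{eq:psi}; whether you substitute $\phi=c\psi^{\mb}$ before or after taking the divergence is immaterial. Your explicit remark on justifying the reduction to the first-order identity is a welcome clarification of a step the paper performs silently.
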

\begin{proof}
  The proof follows directly from the profile equation \eqref{prof2} . Indeed,
   \begin{align*}
   \phi^{m-1}\,\nabla(-\Delta)^{-s}\phi  & = -\beta y \,\phi\\
  \nabla \cdot \nabla(-\Delta)^{-s}\phi  & = -\beta \nabla\cdot( y \,\phi^{2-m})\\
  (-\Delta)^{1-s}\phi  & = \beta ( N \, \phi^{2-m} +  (2-m)y \phi^{1-m} \nabla \phi)\\
     \phi^{2(m-2)}(-\Delta)^{1-s}\phi & = \beta ( N \, \phi^{m-2} - y  \nabla \phi^{m-2 })\\
\psi^{2}(-\Delta)^{1-s}\psi^{\mb} & =  b (N \, \psi  -  y  \nabla \psi).
 \end{align*}
\end{proof}

\begin{remark}\begin{enumerate}
\item[(i)]  In Lemma \ref{LemmaSelfSim} we will  have prove the existence of a self-similar solution of model \eqref{eq:maineq} when $N=1$ as a limit of the rescaled solutions.  The transformation formula given by Lemma  \ref{lem:trans2} would give the existence of a self-solution of model \eqref{eq:maineq} for $N\geq1$ and $m\geq2$ subject to a rigorous proof of existence of selfsimilar solutions of model  \eqref{eq:model2}.
\item[(ii)] Model \eqref{eq:model2} has not been studied in the literature yet. However, it seems natural to think that it will enjoy some good properties, such as comparison principle for viscosity solutions. It is a possible direction to follow in order to deal with the open problem of the asymptotic behavior of solutions of model \eqref{eq:maineq} for $m>2$ and $N>1$ where no uniqueness is known for \eqref{eq:maineq}.
\end{enumerate}
\end{remark}

\section{The integrated model in one dimension for $m\in(1,\infty)$}\label{sec:inteq}

We consider model \eqref{eq:maineq} in one space dimension
\begin{equation}\label{model1repeat}
\partial_t u= \partial_x \cdot (u^{m-1} \partial_x (-\Delta)^{-s}u),
\end{equation}
for $x\in \mathbb{R}$, $t>0$ and $s\in (0,1)$. We take compactly supported initial data $u_0\ge 0$ such that $u_0 \in L^1_{\text{loc}}(\mathbb{R}).$ 
The "integrated solution" $v$ is defined by
\begin{equation}\label{def.v}
v(x,t)=\int_{-\infty}^x u(y,t)\, dy \ge 0 \quad \text{for }t>0, \ x \in \mathbb{R}.
\end{equation}
Therefore $v_x=u$ and $v(x,t)$ will be a solution (in the viscosity sense) of the equation
\begin{equation}\label{IntegEq}
\partial_t v= -|v_x|^{m-1}\lapal v ,
\end{equation}
with $\alpha=1-s$  and  initial data
\begin{equation}\label{initialv0}
v(x,0)=v_0(x):=\int_{-\infty}^x u_0(x)\,dx  \quad \text{ for all }x \in \mathbb{R}.
\end{equation}
Note that $v(x,t)$ is a non-decreasing function in the space variable $x$. Moreover, since $u(x,t)$ enjoys the property of conservation of mass, then $v(x,t)$ satisfies
$$ \lim_{x\to -\infty}v(x,t)=0, \quad  \lim_{x\to +\infty}v(x,t)=M $$
for all $t\geq0$. We consider viscosity solutions $v(x,t)$ of  \eqref{IntegEq}-\eqref{initialv0} in the sense of Crandall-Lions.

\begin{definition}Let $v$ be a upper semi-continuous function (resp. lower\\ semi-continuous function) in $ \R \times (0,\infty)$.

(i) We say that $v$ is a \textbf{viscosity sub-solution} (resp. \textbf{super-solution}) of equation \eqref{IntegEq} on $\R\times (0,\infty)$ if for any point $(x_0,t_0)$ with $t_0>0$ and any $\tau \in (0,t_0)$ and any test function $\varphi \in C^2( \mathbb{R} \times (0,\infty) ) \cap L^\infty( \mathbb{R}\times (0,\infty) )$ such that $v-\varphi$ attains a global maximum (resp. minimum) at the point $(x_0,t_0)$ on
$
Q_\tau=\mathbb{R} \times (t_0-\tau,t_0]
$
we have that
$$
\partial_t \varphi (x_0,t_0)+ |\varphi_x(x_0,t_0)|^{m-1}(\lapal \varphi(\cdot, t_0)) (x_0) \le 0 \quad (\ge 0).
$$
(ii) We say that $v$ is a \textbf{viscosity sub-solution} (resp. \textbf{super-solution})
 of the initial-value problem \eqref{IntegEq}-\eqref{initialv0} on $\R\times (0,\infty)$ if it satisfies moreover at $t=0$
$$
v(x,0)\le  \limsup_{y\to x,\ t \to 0} v(y,t) \quad ( \text{resp. } v(x,0)\ge  \liminf_{y\to x,\ t \to 0} v(y,t)).
$$
We say that $v\in C(\R \times (0,\infty))$ is a \textbf{viscosity solution} if $v$ is a viscosity sub-solution and a viscosity super-solution on $\R\times (0,\infty)$.
\end{definition}
Since equation \eqref{IntegEq} is invariant under translations, the test function $\varphi$ in the above definition can be taken such that $\varphi$ touches $v$ from above in the sub-solution case, resp. $\varphi$ touches $v$ from below in the super-solution case.

Now we state the correspondence of solutions between the two models, together with the regularity estimates that $u$ inherits from $v$.

\begin{proposition}\label{prop:equivalence}
Let $s\in(0,1)$ and $m\geq1$. Let also $u$ be a weak solution for Problem \eqref{model1repeat} with initial data $u_0\in \mathcal{M}_+(\R)$. Then $v$ defined by \eqref{def.v} is a viscosity solution for Problem \eqref{IntegEq}-\eqref{initialv0} and $v\in C(\R \times (0,T))$. If additionally $u_0\in L^\infty(\R)$, then $v\in C(\R \times [0,T))$.
\end{proposition}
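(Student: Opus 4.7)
The overall strategy is to upgrade the correspondence from a formal classical equivalence to the viscosity level by approximation, exploiting the identity
\[
(-\Delta)^{1-s} v \;=\; -\partial_x\,(-\Delta)^{-s} v_x \;=\; -\partial_x\,(-\Delta)^{-s} u
\]
which holds for sufficiently smooth functions with adequate decay such that $v_x = u$. My plan is: (i) verify the classical equivalence on smooth approximants; (ii) pass to the limit via stability of viscosity solutions and uniform regularity of the integrated functions.

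\textbf{Step 1 (classical equivalence).} For smooth nonnegative $u$ solving \eqref{model1repeat} with $v$ defined by \eqref{def.v}, note $v_x = u \ge 0$ eliminates the sign ambiguity in $|v_x|^{m-1}$. Differentiating the integrated equation $v_t = -|v_x|^{m-1}(-\Delta)^{1-s}v$ in $x$ and using the displayed identity yields \eqref{model1repeat}; conversely, integrating \eqref{model1repeat} in $x$ from $-\infty$ and using conservation of mass gives back \eqref{IntegEq}. So smooth, decaying solutions of one problem are classical solutions of the other.

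\textbf{Step 2 (approximation and limit passage).} Use the construction underlying the existence theorem (mollification of the data and regularization of $(-\Delta)^{-s}$, as developed in \cite{StTeVa14,StTeVa16}) to obtain smooth approximants $u_\epsilon \ge 0$ of $u$, together with the integrated quantities $v_\epsilon(x,t) = \int_{-\infty}^{x} u_\epsilon(y,t)\,dy$. By Step~1 (applied to the regularized system) each $v_\epsilon$ is a classical, hence viscosity, solution of an approximate integrated equation. I would then produce equi-continuity of $v_\epsilon$ on compact sets of $\R \times (0,T)$: uniform-in-$\epsilon$ $L^\infty$ smoothing bounds on $u_\epsilon(\cdot,t)$ for $t \ge \tau > 0$ give equi-Lipschitz bounds in $x$, while the $C([0,T);L^1)$ regularity of $u_\epsilon$, applied to
$v_\epsilon(x,t_2) - v_\epsilon(x,t_1) = \int_{-\infty}^{x}(u_\epsilon(y,t_2) - u_\epsilon(y,t_1))\,dy$, yields equi-continuity in $t$. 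Arzel\`a--Ascoli produces a locally uniform limit, which must coincide with $v$ (since $u_\epsilon \to u$ in $L^1_{\textup{loc}}$). Stability of viscosity solutions for nonlocal equations (Barles--Imbert style) then transfers the viscosity property to $v$.

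\textbf{Step 3 (initial data and $L^\infty$ case).} Continuity at $t=0$ in the viscosity sense follows from pointwise convergence $v(x,t) \to v_0(x)$ as $t \to 0$ at continuity points of $v_0$, combined with monotonicity of $v$ in $x$. When $u_0 \in L^\infty(\R)$, the standard $L^\infty$-contraction $\|u(\cdot,t)\|_{L^\infty} \le \|u_0\|_{L^\infty}$ available for the scheme gives a uniform-in-$t$ Lipschitz constant for $v(\cdot,t)$ up to $t=0$, upgrading the continuity to $v \in C(\R \times [0,T))$ and $v(\cdot,0) = v_0$ pointwise.

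\textbf{Main obstacle.} The delicate point is the passage to the limit in the viscosity inequalities when the operator is nonlocal and singular of order $2(1-s)$. One must show that for a test function $\varphi \in C^2 \cap L^\infty$ touching $v$ from above at $(x_0,t_0)$, the quantity $(-\Delta)^{1-s}\varphi(\cdot,t_0)(x_0)$ is the correct limit of the regularized evaluations. This is handled by the classical splitting into a near-field piece, controlled by $\varphi \in C^2$, and a far-field piece, controlled by the uniform bound $0 \le v_\epsilon \le M$ together with the integrable kernel tail. A minor additional check is that the regularized nonlocal operators used in constructing $u_\epsilon$ converge to $(-\Delta)^{1-s}$ on $C^2$ test functions, which is standard for the usual mollification or vanishing-viscosity schemes.
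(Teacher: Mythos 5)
Your overall strategy---integrate the approximate solutions from the existence construction, observe that the resulting integrated functions solve (classically, hence in the viscosity sense) the corresponding regularized integrated equation, obtain local uniform compactness, and transfer the viscosity property to the limit by stability---is exactly the route the paper takes. The paper implements it with the vanishing-viscosity approximation \eqref{eq:viscreg}, identifying $v_\delta=\int_{-\infty}^x u_\delta(y,t)\,dy$ with $u_\delta$ the solution of the $\delta\Delta$-regularized problem from the last step of the existence proof, and your spatial equi-Lipschitz bound (from the $L^\infty$ smoothing of $u$) and your treatment of the $L^\infty$ case up to $t=0$ also match the paper.

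There is, however, one concrete gap in your compactness argument: you claim equicontinuity in $t$ of the family $\{v_\epsilon\}$ from ``the $C([0,T);L^1)$ regularity of $u_\epsilon$'' applied to $v_\epsilon(x,t_2)-v_\epsilon(x,t_1)=\int_{-\infty}^x\bigl(u_\epsilon(y,t_2)-u_\epsilon(y,t_1)\bigr)\,dy$. Qualitative $L^1$-continuity in time of each $u_\epsilon$ gives continuity of each $v_\epsilon$, but Arzel\`a--Ascoli needs a modulus of continuity \emph{uniform in} $\epsilon$, and no uniform-in-$\epsilon$ $L^1$-in-time modulus is among the available estimates. The paper closes this differently and quantitatively: from the equation, $v_t=u^{m-1}\partial_x(-\Delta)^{-s}u$, and the second energy estimate \eqref{eq:secondenergy} (together with the $L^\infty$ bound on $u$, valid for $t\ge\tau>0$ by the smoothing effect, or up to $t=0$ when $u_0\in L^\infty$) bounds $v_t$ in $L^2([0,T]:L^2(B))$ uniformly in the approximation parameters; interpolating this $L^2$ bound on $v_t$ with the uniform spatial Lipschitz bound $|v_x|=u\le\|u\|_\infty$ yields the uniform time modulus $|v(x_0,t_1)-v(x_0,t_0)|\le K|t_1-t_0|^{1/3}$. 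Replacing your time-equicontinuity step by this interpolation makes the argument complete; the rest of your proposal (near-field/far-field splitting of the nonlocal operator in the stability step, and the treatment of the initial trace) is sound and consistent with the paper.
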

\begin{proof}
We start by proving the regularity of $v$. If $u_0\in L^\infty(\R)$ then $u\in  L^\infty(\R\times(0,T))$.  This implies that $v \in L^\infty([0,T]: \text{Lip}(\R))$ since $v_x =u \in L^\infty([0,T]: L^\infty(\R))$. Moreover $v_t \in {L^2([0,T]:L^2(B))} $ by the second energy estimate for $u$. Then the continuity in time is shown by estimating the decay of the time shift:  $ |v(x_0,t_1)-v(x_0,t_0)| \le K |t_1-t_0|^{1/3}$ when $t_1$ and $t_0$ are close. This is done by combining the $L^2$ estimate on $v_t$ and the already proved spatial regularity.

When $u_0\not \in L^\infty(\R)$, we use the smoothing effect given in Theorem \eqref{ThmExistL1} to show that $u\in L^\infty(\R^N \times (\tau, \infty))$ for any $\tau>0$. Consequently, we proceed as before but avoiding $t=0$.

To show that $v$ is in fact a viscosity solution, we consider the regularized problem
\begin{equation}\label{eq:viscreg}
 (v_\delta)_t= \delta \Delta(v_\delta) + |(v_\delta)_x|^{m-1}(-\Delta)^{1-s}v_\delta.
 \end{equation}
It is clear that $v_\delta=\int_{-\infty}^x u_\delta(y,t)dy$ where $u_\delta$ is the classical solution of
\begin{equation*}
\partial_t u_\delta=\delta  \Delta u_\delta+ \partial_x \cdot (u_\delta^{m-1} \partial_x (-\Delta)^{-s}u_\delta),
\end{equation*}
Not that this is the same problem that in the last step of the prove of existence of $u$. Thus, up to a corresponding subsequence, we can pass to the limit in \eqref{eq:viscreg} and show that $v:=\lim_{\delta \to0}v_\delta$ is a viscosity solution of \eqref{IntegEq} since it is a limit of viscosity solutions.
\end{proof}

The standard comparison principle for viscosity solutions holds true (see Chasseigne and Jakobsen \cite{ChassJakob}). We also refer to \cite{BilerKarchMonneau} for more details regarding properties of this integrated model when $m=2$.
\begin{proposition}[Comparison Principle]Let $m\geq1$ and  $\alpha\in (0,1)$. Let $w$ be a viscosity sub-solution and $W$ be a viscosity super-solution of equation \eqref{IntegEq}.
If $w(x,0) \le W(x,0)$, then $w\le W$ in $\R \times (0,\infty)$.
\end{proposition}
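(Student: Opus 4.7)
The plan is to prove comparison by contradiction, using the doubling-of-variables / Crandall–Ishii technique adapted to nonlocal parabolic equations (as developed by Barles–Imbert and Jakobsen–Karlsen, and precisely the framework of the cited paper of Chasseigne–Jakobsen). Suppose $M_{0}:=\sup_{\R\times[0,T]}(w-W)>0$ for some $T>0$. Since $w,W$ are only semi-continuous and do not decay at infinity (indeed, integrated profiles satisfy $v(+\infty,t)=M$), I would introduce, for small $\varepsilon,\eta,\delta>0$, the penalized functional
\[
\Psi(x,y,t,s)=w(x,t)-W(y,s)-\tfrac{|x-y|^{2}}{2\varepsilon^{2}}-\tfrac{(t-s)^{2}}{2\varepsilon^{2}}-\tfrac{\eta}{T-t}-\delta\rho(x),
\]
with $\rho\in C^{2}(\R)$ growing at infinity and such that $(-\Delta)^{\alpha}\rho\in L^{\infty}(\R)$ (for example $\rho(x)=\langle x\rangle^{\gamma}$ with $\gamma$ small). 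Standard semi-continuity and compactness arguments yield an interior global maximum point $(\hat x,\hat y,\hat t,\hat s)$ with $\hat t,\hat s\in(0,T)$ (the assumption $w(\cdot,0)\le W(\cdot,0)$ together with the time-penalty prevents $\hat t,\hat s=0$), together with the classical estimate $|\hat x-\hat y|^{2}/\varepsilon^{2}+|\hat t-\hat s|^{2}/\varepsilon^{2}\to 0$ as $\varepsilon\to 0$.

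Writing $p:=(\hat x-\hat y)/\varepsilon^{2}$, the functions
\[
\varphi(x,t):=\tfrac{|x-\hat y|^{2}}{2\varepsilon^{2}}+\tfrac{(t-\hat s)^{2}}{2\varepsilon^{2}}+\tfrac{\eta}{T-t}+\delta\rho(x),\qquad \phi(y,s):=-\tfrac{|\hat x-y|^{2}}{2\varepsilon^{2}}-\tfrac{(\hat t-s)^{2}}{2\varepsilon^{2}},
\]
after truncation outside a neighborhood of the critical points and bounded extension (as required by the $L^\infty$-restriction in the definition of viscosity solution), touch $w$ from above at $(\hat x,\hat t)$ and $W$ from below at $(\hat y,\hat s)$. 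Both test functions have spatial gradient $p$ at their critical points, so the sub-/super-solution inequalities subtract to
\[
\tfrac{\eta}{(T-\hat t)^{2}}\le |p|^{m-1}\bigl[(-\Delta)^{\alpha}\phi(\cdot,\hat s)(\hat y)-(-\Delta)^{\alpha}\varphi(\cdot,\hat t)(\hat x)\bigr].
\]
If $p=0$ we obtain an immediate contradiction. Otherwise, I would split each fractional Laplacian at radius $r>0$: the near-part contribution is of order $\varepsilon^{-2}r^{2-2\alpha}+\delta$, while on the far part one replaces $\varphi$ by $w$ and $\phi$ by $W$ (using $w\le\varphi$ and $W\ge\phi$ with equality at the critical points) and invokes the global maximality of $\Psi$ at the translated pairs $(\hat x+z,\hat y+z,\hat t,\hat s)$, which makes the far-part integrand pointwise non-positive modulo an $O(\delta)$ term coming from $\rho$.

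The main obstacle is balancing the prefactor $|p|^{m-1}$, possibly large as $\varepsilon\to0$, with the near-part error $\varepsilon^{-2}r^{2-2\alpha}$: one must choose $r=r(\varepsilon)\to 0$ so that $|p|^{m-1}\varepsilon^{-2}r^{2-2\alpha}\to 0$, exploiting the compactness estimate $|\hat x-\hat y|\to 0$ and, if necessary, an improved modulus-of-continuity estimate on $w$ inherited from the vanishing-viscosity construction of Proposition \ref{prop:equivalence}. Once this balance is achieved, the inequality reads $\eta/(T-\hat t)^{2}\le o_{\varepsilon}(1)+C\delta$, and letting in order $\varepsilon\to 0$, $\delta\to 0$, $\eta\to 0$ yields $\eta\le 0$, a contradiction. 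The whole procedure is the nonlocal parabolic comparison template of Chasseigne–Jakobsen; the quasilinear coefficient $|v_{x}|^{m-1}$ does not change the structure because it appears with the same exponent on both the sub- and super-solution sides and hence simply factors out as $|p|^{m-1}$ at the doubled critical point.
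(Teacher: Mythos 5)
First, a point of reference: the paper does not actually prove this proposition --- it is imported wholesale from Chasseigne--Jakobsen \cite{ChassJakob} ("The standard comparison principle for viscosity solutions holds true"). Your sketch reconstructs exactly the doubling-of-variables template that the cited source uses, so the overall strategy is the right one and is the one the paper implicitly relies on. The skeleton (penalization in time and at infinity, interior maximum, subtraction of the sub/super inequalities, near/far splitting of the fractional Laplacian, far part controlled by global maximality of $\Psi$) is all standard and correctly laid out.

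The genuine gap is the one you name yourself and then do not resolve: the factor $|p|^{m-1}$ with $p=(\hat x-\hat y)/\varepsilon^{2}$. The classical penalization estimate only gives $|\hat x-\hat y|^{2}/\varepsilon^{2}\to 0$, i.e.\ $|p|=o(1/\varepsilon)$, so for $m>1$ the prefactor $|p|^{m-1}$ may blow up like $\varepsilon^{-(m-1)}$. Choosing $r=r(\varepsilon)$ small does kill the near-part error $|p|^{m-1}\varepsilon^{-2}r^{2-2\alpha}$, but the far part, after replacing the test functions by $w,W$ and invoking maximality of $\Psi$, is only bounded by $|p|^{m-1}\,O(\delta)$ (the $\rho$-correction), and with your stated order of limits ($\varepsilon\to 0$ first, $\delta$ fixed) this term need not stay of size $C\delta$ --- it can diverge. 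So the concluding inequality $\eta/(T-\hat t)^{2}\le o_{\varepsilon}(1)+C\delta$ does not follow as written. Closing this requires either (i) an a priori spatial Lipschitz bound on one of the two functions, which makes $|p|$ bounded uniformly in $\varepsilon$ --- this is true for the integrated solutions of this paper, since $v_{x}=u\in L^{\infty}$, but is \emph{not} assumed of a general semicontinuous sub/supersolution in the proposition as stated --- or (ii) the specific structural arguments of \cite{ChassJakob} that control the gradient of the test function at the doubled maximum for this quasilinear nonlocal operator. A further minor slip: the two test functions do not both have spatial gradient $p$; the subsolution one carries the extra term $\delta\rho'(\hat x)$, so one must also estimate $\bigl||p+\delta\rho'(\hat x)|^{m-1}-|p|^{m-1}\bigr|$, which again interacts badly with an unbounded $|p|$. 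In short: right template, but the quasilinear coefficient does not "simply factor out"; handling it is precisely the content of the citation the paper leans on.
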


The following uniqueness result is also proved in \cite{ChassJakob}. This result is crucial to obtain uniqueness of weak solution in dimension one for $\eqref{eq:maineq}$.
\begin{proposition}\label{prop:uniqinteg}
Let  $m\geq1$ and  $\alpha\in (0,1)$. Then there exists a unique viscosity solution of Problem \eqref{IntegEq} with piecewise continuous initial data.
\end{proposition}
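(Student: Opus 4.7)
The plan is to deduce uniqueness as a direct corollary of the comparison principle stated in the preceding proposition, applied twice in opposite directions. Suppose $v_1$ and $v_2$ are two viscosity solutions of \eqref{IntegEq}--\eqref{initialv0} corresponding to the same piecewise continuous initial datum $v_0$. In particular, $v_1$ is a viscosity subsolution and $v_2$ is a viscosity supersolution of \eqref{IntegEq}. If the initial condition inequalities $v_1(x,0) \le v_0(x) \le v_2(x,0)$ can be certified in the sense required by the comparison principle, the latter yields $v_1 \le v_2$ on $\R \times (0,\infty)$. Swapping the roles of $v_1$ and $v_2$ yields the reverse inequality, and hence $v_1 \equiv v_2$.

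The first step is therefore to verify that each solution satisfies the initial trace condition in both directions: namely that, at any point $x$ of continuity of $v_0$, one has $v_i(x,0) = v_0(x)$, and at a jump point one controls $v_i$ from above and from below by the upper and lower semicontinuous envelopes $v_0^*$ and $v_{0,*}$ of $v_0$. This is standard in the Crandall--Lions setting and, in our situation, follows from the very definition of viscosity sub/supersolution recalled in the text combined with the fact that $v_i$ is a genuine continuous solution of \eqref{IntegEq} on $\R \times (0,T)$ by Proposition~\ref{prop:equivalence}.

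The second step is the invocation of the comparison principle. Because $v_1$ is upper semicontinuous and a subsolution, and $v_2$ is lower semicontinuous and a supersolution, with $v_1(\cdot,0) \le v_2(\cdot,0)$, the comparison principle gives $v_1 \le v_2$ on $\R \times (0,\infty)$. Symmetry yields the opposite inequality, so $v_1 = v_2$.

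The main obstacle is the discontinuity of the initial data. One cannot simply assert pointwise equality $v_i(x,0) = v_0(x)$ at jump points, and the comparison principle presupposes an initial ordering compatible with the semicontinuity of the solutions. The clean way to handle this is to split the initial datum at each jump point $x_k$ into the left and right limits, to sandwich $v_0$ between two piecewise continuous data that are themselves semicontinuous (from above and below, respectively), and to apply comparison to these sandwiching data using the uniqueness of the continuous ones; alternatively, one appeals directly to the framework of Chasseigne--Jakobsen \cite{ChassJakob}, where comparison is proved precisely for semicontinuous envelopes of piecewise continuous data, and the argument above becomes a routine application of their result. Either way, the nontrivial work has already been done in establishing comparison for the non-local, non-linear equation \eqref{IntegEq}, and uniqueness is essentially an immediate consequence.
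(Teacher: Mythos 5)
Your proposal is correct in outline and matches what the paper actually does: the paper offers no proof of Proposition~\ref{prop:uniqinteg} at all, but simply attributes it to Chasseigne and Jakobsen \cite{ChassJakob}, and the comparison-twice argument you sketch (together with the correct observation that the only real subtlety is the initial ordering at jump points of the piecewise continuous datum, handled via semicontinuous envelopes) is precisely the standard route taken in that reference. Since you ultimately defer the handling of discontinuous data to \cite{ChassJakob} just as the paper does, there is nothing substantive to fault.
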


\section{Main results for model (M1)}\label{Section:Main}

\subsection{Existence of solutions}

The most important contribution to the existence theory  was done in \cite{StTeVa17} where we constructed a weak solution in the sense of Definition \ref{def1} in the general setting of initial data any $\mu \in \mathcal{M}^{+} (\RN)$, the space of nonnegative Radon measures on $\RN$ with \em{finite mass}\em. In particular, this includes the case of merely integrable data $u_0 \in L^1(\RN)$.

\begin{theorem}\label{ThmExistL1}
Let  $m\in (1,+\infty)$, $s\in (0,1)$, $N\ge 1$ and     $\mu \in \mathcal{M}^{+} (\RN)$.  Then there exists a nonnegative weak solution $u$ of Problem \eqref{eq:maineq} and for all $\tau>0$,
$$u \in  L^\infty((\tau,\infty):L^1(\RN))\cap L^\infty (\RN \times (\tau, \infty)) \cap  L^\infty((0,\infty):  \mathcal{M}^{+} (\RN) ).$$
Moreover, $u$ has the following properties:
\begin{enumerate}
\item \textbf{(Conservation of mass)} For all $0 < t < T$ we have
$\displaystyle{
\int_{\RN}u(x,t)dx= \int_{\RN}d\mu(x).
}$
\item \textbf{($L^{\infty}$ estimate) } $0 < \tau<t < T$ we have  $||u(\cdot,t)||_\infty\leq ||u(x,\tau)||_\infty$.

\item \textbf{($L^p$ - energy estimate)} For all $1<p<\infty$ and $0 <\tau< t < T$ we have
\begin{equation}\label{eq:lpenergy}
\begin{split}\intr u^p(x,t)dx + \frac{4p(p-1)}{(m+p-1)^2}\int_\tau^t &\int_{\RN}\Big|(-\Delta)^{\frac{1-s}{2}} \left[u^{\frac{m+p-1}{2}}\right](x,s)\Big|^2dxds \\
&\le \intr u^p(x,\tau)dx .
\end{split}
\end{equation}

\item \textbf{(Second energy estimate)} For all $0 < \tau<t < T$  we have
\begin{equation}\label{eq:secondenergy}
\begin{split}
  \frac{1}{2} \intr \left|(-\Delta)^{-\frac{s}{2}} u(x,t)\right|^2 dx  +
&\int_\tau^t \intr  u^{m-1} \left| \nabla  (-\Delta)^{-s} u(x,s)\right|^2 dx  ds\\
& \le \frac{1}{2} \intr \left|(-\Delta)^{-\frac{s}{2}} u(x,\tau)\right|^2 dx.
\end{split}
\end{equation}

\item \textbf{(Smoothing effect)} For all $t>0$, we have
\begin{equation*}
\| u(\cdot,t)\|_{L^{\infty}(\RN)} \le C_{N,s,m} \, t^{-\gamma} \mu(\R^N)^{\delta}
\end{equation*}
where
$\gamma=\frac{N}{(m-1)N+2(1-s)}>0$ and  $\delta=\frac{2(1-s)}{(m-1)N+2(1-s)}>0$.

\end{enumerate}

\end{theorem}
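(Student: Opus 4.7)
The plan is to follow the parabolic-regularization strategy of \cite{StTeVa17}: construct a smooth approximate problem, derive uniform a priori estimates matching properties (1)--(5), and then pass to two successive limits---first in the regularization parameter $\delta\to 0$, and then in an approximation of $\mu$ by smooth compactly supported data of equal mass.

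First, assume $u_0\in C^\infty_c(\RN)$, $u_0\ge 0$, and consider the uniformly parabolic regularization
\[
\partial_t u_\delta = \delta\,\Delta u_\delta + \nabla\cdot\bigl(u_\delta^{m-1}\,\nabla(-\Delta)^{-s}u_\delta\bigr),\qquad u_\delta(0,\cdot)=u_0+\delta\eta,
\]
where $\eta$ is a positive, smooth, rapidly decaying bump making $u_\delta>0$ initially. Since $\nabla(-\Delta)^{-s}$ is smoothing, a fixed-point/bootstrap argument on the now strictly parabolic equation yields a smooth, strictly positive $u_\delta$ on every $(0,T)$. Each property is then established on $u_\delta$ with constants independent of $\delta$: mass conservation by integrating the equation (both terms are divergences); the $L^\infty$-bound by the classical parabolic maximum principle, in which $\delta\Delta u_\delta$ only helps; the $L^p$-energy estimate by multiplying by $u_\delta^{p-1}$, discarding the nonnegative viscous term $\delta(p-1)\int u_\delta^{p-2}|\nabla u_\delta|^2$, using
\[
-\int u_\delta^{p-1}\nabla\cdot\bigl(u_\delta^{m-1}\nabla(-\Delta)^{-s}u_\delta\bigr)dx \;=\; \frac{p-1}{p+m-2}\int u_\delta^{p+m-2}(-\Delta)^{1-s}u_\delta\,dx,
\]
and invoking the Stroock--Varopoulos inequality for $(-\Delta)^{1-s}$ with exponent $r=p+m-1$, which produces exactly the constant $4p(p-1)/(m+p-1)^2$; the second energy estimate by testing against $(-\Delta)^{-s}u_\delta$, whereby the left-hand side becomes $\tfrac{1}{2}\frac{d}{dt}\int|(-\Delta)^{-s/2}u_\delta|^2$ and the right-hand side becomes $-\int u_\delta^{m-1}|\nabla(-\Delta)^{-s}u_\delta|^2$ plus a favorable viscous contribution; and the smoothing effect by a Moser/De Giorgi iteration on the $L^p$-energy, using the Sobolev embedding $\dot H^{1-s}(\RN)\hookrightarrow L^{2N/(N-2(1-s))}(\RN)$ along a sequence $p_k\to\infty$, with exponents $\gamma,\delta$ forced by the mass-preserving scaling $u_\lambda(x,t)=\lambda^N u(\lambda x,\lambda^{N(m-1)+2(1-s)}t)$.

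To pass $\delta\to 0$, the uniform $L^p$-energy controls $(-\Delta)^{(1-s)/2}\bigl[u_\delta^{(m+p-1)/2}\bigr]$ in $L^2((\tau,T)\times\RN)$, while reading the equation gives a uniform bound on $\partial_t u_\delta$ in a negative Sobolev space; Aubin--Lions then yields strong convergence $u_\delta\to u$ in $L^q_{\rm loc}((\tau,T)\times\RN)$ for a suitable $q$. Simultaneously the second energy estimate provides a uniform $L^2$ bound on the weighted flux $u_\delta^{(m-1)/2}\nabla(-\Delta)^{-s}u_\delta$; combined with the strong convergence of $u_\delta$, this identifies the weak limit of $u_\delta^{m-1}\nabla(-\Delta)^{-s}u_\delta$, so one may pass to the limit in every term of the weak formulation. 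To extend the construction to arbitrary $\mu\in\mathcal{M}^{+}(\RN)$, approximate $\mu$ weakly by smooth compactly supported nonnegative data of the same mass and repeat the compactness argument: the estimates (1)--(5) depend only on $\mu(\RN)$, and the smoothing effect (5) supplies the uniform $L^\infty$-bound away from $t=0$ needed to rerun Aubin--Lions.

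The main obstacle is precisely the limit in the nonlinear nonlocal flux $u_\delta^{m-1}\nabla(-\Delta)^{-s}u_\delta$: when $1<m<2$ the factor $u_\delta^{m-1}$ is only H\"older in $u_\delta$, and $\nabla(-\Delta)^{-s}u_\delta$ is a global functional of $u_\delta$, so neither factor is automatically compact. The whole construction is engineered around this point: Stroock--Varopoulos supplies the dissipative $L^p$-structure, the second energy estimate yields the weighted $L^2$ control of the flux that upgrades weak convergence of the product to a usable one, and the strict positivity $u_\delta>0$ coming from the mollified initial data is what justifies the formal manipulations with $u_\delta^{m-1}$. The interplay of these three ingredients is the delicate core of the proof.
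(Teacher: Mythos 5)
Your overall strategy matches the paper's: regularize, derive the estimates (1)--(5) uniformly (the Stroock--Varopoulos computation with $\psi(u)=u^{p+m-2}$ giving exactly the constant $4p(p-1)/(m+p-1)^2$, the second energy estimate by testing with $(-\Delta)^{-s}u$, the smoothing effect from the $L^p$ energy plus a Sobolev/Nash--Gagliardo--Nirenberg step), pass to the limit by compactness, and then treat general $\mu\in\mathcal{M}^{+}(\RN)$ by mollification, using the smoothing effect to get uniform $L^\infty$ bounds away from $t=0$ and the second energy estimate to recover the initial trace. However, your single-parameter regularization leaves two genuine gaps. First, the construction of the approximate solution: you justify the fixed point by saying that $\nabla(-\Delta)^{-s}$ is smoothing, but it is an operator of order $1-2s$, hence \emph{not} smoothing for $s\le 1/2$; and the coefficient $u^{m-1}$ is not Lipschitz near $u=0$ when $m<2$, so the contraction cannot be set up on the raw nonlinearity. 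Lifting the data to $u_0+\delta\eta$ does not repair this, because strict positivity of the solution is a property you would have to prove \emph{after} constructing it, and in any case positivity can degenerate at spatial infinity. The paper avoids both problems by regularizing the equation itself: the nonlinearity is shifted to $(u+\mu)^{m-1}$, the operator $(-\Delta)^{1-s}$ is replaced by the bounded zero-order operator $\mathcal{L}^{1-s}_\epsilon$ of \eqref{eq:aproxop}, and the problem is posed on a ball $B_R$; the four parameters $\epsilon,\delta,\mu,R$ are then removed one at a time, each limit requiring its own compactness argument.

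Second, the compactness as $\delta\to 0$. You invoke Aubin--Lions using the $L^2_{t,x}$ control of $(-\Delta)^{(1-s)/2}\bigl[u_\delta^{(m+p-1)/2}\bigr]$ plus a negative-Sobolev bound on $\partial_t u_\delta$. But standard Aubin--Lions requires a uniform bound on $u_\delta$ \emph{itself} in a space compactly embedded in $L^2$, whereas the energy estimate only controls a nonlinear power of $u_\delta$; once the viscous term $\delta\int|\nabla(u_\delta^{p/2})|^2$ is discarded, no $H^1$-type information on $u_\delta$ survives. This is precisely why the paper keeps the $\delta\to 0$ limit for last and switches there to the Rakotoson--Temam compactness criterion, which is designed to extract strong convergence from bounds on a monotone function of the unknown. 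Your identification of the limit of the flux $u_\delta^{m-1}\nabla(-\Delta)^{-s}u_\delta$ via the weighted $L^2$ bound from the second energy estimate is the right idea and is what the paper does, but it only works once strong a.e.\ convergence of $u_\delta$ has been secured by the correct compactness theorem.
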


\begin{remark}
 If $u_0\in L^1(\R^N)\cap L^\infty(\R^N)$, all the properties of Theorem \eqref{ThmExistL1} hold up to $\tau=0$.

\end{remark}


\subsection{Uniqueness in dimension $N=1$}\label{Section:Uniqueness}

Uniqueness of weak solutions is proved in the one-dimensional case.

\begin{theorem}\label{ThmUnique}
Let  $m\in (1,+\infty)$, $s\in (0,1)$, $N= 1$ and  $\mu \in \mathcal{M}^{+} (\RN)$. Then there exists a unique weak solution to Problem \eqref{eq:maineq}.
\end{theorem}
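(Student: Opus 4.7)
The plan is to reduce the uniqueness question for the PDE \eqref{eq:maineq} to uniqueness for its integrated one-dimensional counterpart \eqref{IntegEq}, for which a comparison/uniqueness theory is already available by Proposition \ref{prop:uniqinteg}. Suppose $u_1,u_2$ are two weak solutions of \eqref{eq:maineq} in the sense of Definition \ref{def1} with the common initial measure $\mu\in\mathcal{M}^{+}(\R)$. Because the construction of the integrated function in \eqref{def.v} only uses the spatial primitive of $u$, and $u_i(\cdot,t)\in L^1(\R)$ for $t>0$ by Theorem \ref{ThmExistL1}, we may set
\[
v_i(x,t):=\int_{-\infty}^{x} u_i(y,t)\,dy\quad (t>0),\qquad v_i(x,0):=\mu\bigl((-\infty,x]\bigr).
\]
Both $v_i$ are nondecreasing in $x$, bounded between $0$ and $M=\mu(\R)$, and by conservation of mass satisfy $v_i(-\infty,t)=0$, $v_i(+\infty,t)=M$ for every $t\geq 0$.

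Next, I would apply Proposition \ref{prop:equivalence} to each $u_i$: it directly tells us that $v_i\in C(\R\times(0,T))$ is a viscosity solution of the integrated equation \eqref{IntegEq} on $\R\times(0,\infty)$. The only delicate point is matching the initial trace, since Proposition \ref{prop:equivalence} is stated for $u_0\in L^\infty$ (giving continuity up to $t=0$) or $u_0\in\mathcal{M}^+$ (giving continuity only on $t>0$). However, for the viscosity formulation we do not need continuity at $t=0$; it is enough to know that for every $x\in\R$ where $v_0$ is continuous we have $v_i(x,t)\to v_0(x)$ as $t\to 0^+$, and that at the (at most countably many) jump points the semicontinuous envelopes of $v_i$ satisfy the correct inequalities. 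This follows from the initial trace property of weak solutions, monotone convergence, and the monotonicity of $v_i$ in $x$.

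Finally I would invoke Proposition \ref{prop:uniqinteg}, which asserts uniqueness of viscosity solutions of \eqref{IntegEq} with piecewise continuous initial data (with $\alpha=1-s\in(0,1)$). The initial datum $v_0(x)=\mu((-\infty,x])$ is monotone nondecreasing and bounded, hence has at most countably many jump discontinuities and is therefore piecewise continuous in the sense required. The proposition yields $v_1\equiv v_2$ on $\R\times(0,\infty)$. Differentiating in $x$ in the sense of distributions gives $u_1=\partial_x v_1=\partial_x v_2=u_2$ a.e.\ on $\R\times(0,T)$, which is the desired uniqueness.

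The main obstacle I expect is the initial-trace matching: weak solutions take the initial datum $\mu$ only in the sense of distributions (initial trace), whereas the viscosity uniqueness statement needs the initial condition to be attained in the upper/lower semicontinuous viscosity sense. Closing this gap requires showing that if two weak solutions share the initial measure, then the two cumulative distribution functions $v_1(\cdot,t)$ and $v_2(\cdot,t)$ both converge to the same $v_0$ at all continuity points as $t\to 0^+$, which in turn forces their semicontinuous envelopes at $t=0$ to coincide everywhere because of monotonicity in $x$. Once this is in place, the argument is otherwise mechanical.
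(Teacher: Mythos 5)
Your proposal is correct and follows essentially the same route as the paper, which proves Theorem \ref{ThmUnique} precisely by combining Proposition \ref{prop:equivalence} (the integrated function of any weak solution of \eqref{eq:maineq} is a viscosity solution of \eqref{IntegEq}) with the uniqueness of viscosity solutions given by Proposition \ref{prop:uniqinteg}, and then differentiating in $x$. The extra care you devote to matching the initial trace at $t=0$ is a sensible elaboration of a point the paper leaves implicit, not a departure from its argument.
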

This theorem identifies the constructed weak solution obtained in Theorem \ref{ThmExistL1} as the unique weak solution to Problem \eqref{eq:maineq}. The proof follows as consequence of  Proposition \ref{prop:equivalence} and Proposition \ref{prop:uniqinteg}.


\subsection{Speed of propagation}

A very interesting property is the finite/infinite speed of propagation of the solution of Problem \eqref{eq:maineq}   depending on the nonlinearity parameter $m$, as proved in \cite{StTeVa16,StTeVa17}.

\begin{theorem}\label{ThmFiniteProp}
Let $m\in[2,\infty)$, $s\in(0,1)$ and $N\geq1$. Assume that $u_0\in L^\infty(\R^N)$ has compact support and let $u$ be the constructed weak solution of Problem \eqref{eq:maineq} given in Theorem \ref{ThmExistL1}. Then, $u(\cdot,t)$ has compact support for all $t>0$, i.e., $u$ has \textbf{finite speed of propagation}.
\end{theorem}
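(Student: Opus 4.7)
The plan is to adapt the Caffarelli–V\'azquez method of \textbf{true supersolutions} from the case $m=2$ in \cite{CVf1}, along the lines followed in \cite{StTeVa16,StTeVa17}. Since the standard comparison principle fails for \eqref{eq:maineq}, one cannot simply dominate $u$ by a classical supersolution. Instead one constructs explicit barriers whose free boundary is known to move outward at a finite rate, and then shows, through a careful test-function argument at the approximation level, that the solution cannot leak ahead of this front.

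As preparatory step, fix $R_0$ with $\operatorname{supp} u_0 \subset B_{R_0}$ and $M = \|u_0\|_{L^\infty}$. Work with the smooth approximations $u_\delta$ produced in the proof of Theorem \ref{ThmExistL1}; it suffices to establish a uniform bound $\operatorname{supp} u_\delta(\cdot,t) \subset B_{R(t)}$ with $R(t)$ independent of $\delta$ and pass to the limit. For the barrier, one looks for a radial profile of the form
\[
\overline U(x,t) = A\,\bigl(|x|^2 - R(t)^2\bigr)_+^{1/(m-1)}
\]
supported in $\{|x|\ge R(t)\}$, with $R(0)\ge R_0$ and $R(t)$ increasing. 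The exponent $1/(m-1)$ is dictated by the degeneracy structure: it is exactly the power that makes the ``pressure'' $\overline U^{m-1}$ linear at the free boundary, so that the transport velocity $\overline U^{m-1}\nabla(-\Delta)^{-s}\overline U$ stays bounded there. The constant $A$ and the rate $R'(t)$ are then adjusted so that $\overline U \ge u_\delta$ on $\partial B_{R(t)}$ at $t=0$ and the pointwise inequality
\[
\partial_t \overline U - \nabla\cdot\bigl(\overline U^{m-1}\nabla(-\Delta)^{-s}\overline U\bigr)\ge 0
\]
holds on $\{\overline U>0\}$.

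The core analytic step is to verify that $\overline U$ is a \textbf{true supersolution}: namely, that the Riesz potential $(-\Delta)^{-s}\overline U$, which is \emph{not} compactly supported, does not induce an outward drift that outruns $R'(t)$. Using the radial monotonicity and the explicit form of $\overline U$, one estimates $\nabla(-\Delta)^{-s}\overline U$ on the annulus $\{R(t)\le |x|\le R(t)+\rho\}$ and shows that the product $\overline U^{m-1}\,|\nabla(-\Delta)^{-s}\overline U|$ vanishes to a high enough order at the free boundary. The hypothesis $m\ge 2$ is crucial here: when $m-1\ge 1$, the factor $\overline U^{m-1}$ provides enough degeneracy to kill the nonlocal contribution from the tails of the Riesz kernel, whereas for $1<m<2$ this compensation is lost, in agreement with the infinite-speed regime mentioned earlier.

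Once $\overline U$ is a true supersolution, one concludes by testing the equation for $(u_\delta - \overline U)_+$ with a suitable cut-off on the exterior region $\{|x|\ge R(t)\}$. The supersolution inequality, together with the ordering on the inner boundary and the non-negativity of the approximations, yields $(u_\delta - \overline U)_+ \equiv 0$ outside $B_{R(t)}$; letting $\delta\to 0$ gives $\operatorname{supp} u(\cdot,t)\subset B_{R(t)}$ for all $t>0$, with $R(t)$ finite. The main obstacle is the Riesz-potential estimate in the core step: the nonlocal pressure of a compactly supported barrier is not compactly supported, so quantifying the outward drift it generates at the free boundary and ensuring that it stays below the prescribed front speed $R'(t)$ is the technical heart of the argument, and is exactly where the ``true supersolution'' notion earns its name.
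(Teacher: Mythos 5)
There is a genuine gap, on two counts. First, your barrier has the wrong geometry. The profile $\overline U(x,t)=A\,(|x|^2-R(t)^2)_+^{1/(m-1)}$ vanishes \emph{inside} $B_{R(t)}$ and is positive and unbounded \emph{outside} it, so even if you succeeded in proving $(u_\delta-\overline U)_+\equiv 0$ on the exterior region, you would only learn $u_\delta\le A(|x|^2-R(t)^2)^{1/(m-1)}$ there — which says nothing about compact support. The barrier actually used in the paper (following \cite{CVf1}, see Section 6.2.1 and \cite{StTeVa16}) is the parabola $U(x,t)=((Ct-(|x|-b))_+)^2$, positive inside the expanding ball $B_{b+Ct}$ and identically zero outside it; establishing $u\le U$ then forces $u=0$ beyond the front because $u\ge0$. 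Note also that the front profile is quadratic independently of $m$, not of exponent $1/(m-1)$: the quadratic shape is what makes the contact-point algebra below close, and for $m>2$ your exponent $1/(m-1)<1$ would give a barrier with infinite slope at the free boundary.

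Second, and more fundamentally, the concluding step ``test the equation for $(u_\delta-\overline U)_+$'' presupposes a Kato-type/comparison structure that this equation does not have; the paper explicitly records the failure of the comparison principle (with a counterexample in \cite{CVf1}), and the nonlocal, product-form nonlinearity $\nabla\cdot(u^{m-1}\nabla(-\Delta)^{-s}u)$ gives no sign to the cross terms such an energy argument produces. The actual mechanism is a pointwise argument at a hypothetical \emph{first contact point} $(x_c,t_c)$ between $u$ and $U$: there one has $u=U$, $\nabla(u-U)=0$, $(u-U)_t\ge0$, and — crucially — the quantities estimated are $\nabla p$ and $\Delta p$ for $p=(-\Delta)^{-s}u$, the potential of the \emph{solution itself}, controlled via Theorem 5.1 of \cite{CVf1} using only $0\le u\le U\le 1$ up to time $t_c$. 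This is what ``true supersolution'' means there; it is not the assertion that $U$ satisfies a pointwise supersolution inequality with its own nonlocal pressure. Writing $u=h^2$ at the contact point, the expanded equation yields
\begin{equation*}
C\le (m-1)h^{2m-4}\Bigl(-p_r+\tfrac{h}{2}\Delta p\Bigr),
\end{equation*}
and the hypothesis $m\ge2$ enters precisely through the boundedness of $h^{2m-4}$ for $h\le1$, giving a contradiction for $C$ large — a rather different role for $m\ge2$ than the tail-cancellation heuristic you describe. Your proposal correctly identifies the family of ideas (true supersolutions, barriers, the criticality of $m=2$), but as written neither the barrier nor the comparison step would deliver the theorem.
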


\begin{theorem}\label{ThmInfProp}
Let $m\in(1,2)$, $s\in(0,1)$ and $N=1$. Assume $u_0\in L^1(\R^N)\cap L^\infty(\R^N)$ and let $u$ be the weak solution of Problem \eqref{eq:maineq}. Then, for any $t>0$ and $R>0$, the set $\mathcal{P}_{R,t}=\{x: |x|\ge R,\  u(x,t)>0\}$ has positive measure (even if $u_0$ is compactly supported). This is a  weak form of  \textbf{infinite speed of propagation}. Moreover, if $u_0$ is radially symmetric and non-increasing in $|x|$, then  $u(x,t)>0$ for all $x\in \R$ and $t>0$.

\end{theorem}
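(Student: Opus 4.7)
The plan is to pass to the integrated formulation of Section \ref{sec:inteq}, where a comparison principle is available. Set $v(x,t):=\int_{-\infty}^x u(y,t)\,dy$; by Proposition \ref{prop:equivalence}, $v$ is a bounded continuous viscosity solution of $v_t=-|v_x|^{m-1}(-\Delta)^{1-s}v$, non-decreasing in $x$, with $v(-\infty,t)=0$ and $v(+\infty,t)=M:=\int u_0$ by conservation of mass. The engine of the argument is the fundamental self-similar solution of \eqref{eq:maineq}: for $m\in(1,2)$ and $N=1$, Theorem 3.1(i) together with the scaling symmetry of \eqref{eq:maineq} produces, from the Barenblatt profile $\phi_1$ of \eqref{FPME}, a mass-$M$ self-similar solution $V(x,t)=t^{-\alpha_2}\phi_2(xt^{-\beta_2})$ of \eqref{eq:maineq} whose profile $\phi_2=c\,\phi_1^{\mf}$ is strictly positive on $\R$ with polynomial decay at infinity. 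Its primitive $\tilde V(x,t):=\int_{-\infty}^x V(y,t)\,dy$ is therefore a strictly increasing viscosity solution of the integrated equation with $0<\tilde V(x,t)<M$ for every $(x,t)\in\R\times(0,\infty)$, and with initial trace $MH(x)$ (the Heaviside of height $M$).

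For Part (1), assume first that $u_0$ has compact support in $[-R_0,R_0]$. Then $v_0(x)\le MH(x+R_0)$ on $\R$, so the comparison principle yields $v(x,t)\le\tilde V(x+R_0,t)<M$ for every $x\in\R$ and $t>0$. For any $R>0$, the strict bound $v(R,t)<M$ gives $\int_R^{+\infty}u(y,t)\,dy>0$, so $u(\cdot,t)>0$ on a positive-measure subset of $(R,+\infty)$. Likewise, the symmetric lower bound $v_0(x)\ge MH(x-R_0)$ combined with $\tilde V(x-R_0,t)>0$ yields $v(-R,t)>0$, giving a positive-measure positivity set in $(-\infty,-R)$. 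Together $|\mathcal{P}_{R,t}|>0$. For general $u_0\in L^1(\R)\cap L^\infty(\R)$, the conclusion either follows directly from the positivity of $u_0$ at infinity (persisting for small $t$ by $L^1$-continuity of $u$), or by approximating $u_0$ from below by compactly supported data and passing to the limit in the corresponding integrated sub-solutions $\tilde v\le v$.

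For Part (2), Theorem \ref{ThmUnique} together with the invariance of \eqref{eq:maineq} under $x\mapsto -x$ propagates the evenness of $u_0$ to $u(\cdot,t)$, while the non-increasing character in $|x|$ (equivalently, concavity of $v(\cdot,t)$ on $(0,+\infty)$) is preserved by a reflection argument on the integrated equation using the comparison principle. Combined with Part (1), for every $x\in\R$ there exists $x_1$ with $|x_1|>|x|$ and $u(x_1,t)>0$; the monotonicity then forces $u(x,t)\ge u(x_1,t)>0$. The main obstacle is the construction and qualitative control of the fat-tailed primitive $\tilde V$: the strict positivity and polynomial decay of $\phi_2$ inherited from $\phi_1$ via Theorem 3.1(i) are the features unavailable when $m\ge 2$ and responsible for the sharp dichotomy between Theorems \ref{ThmFiniteProp} and \ref{ThmInfProp}. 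A secondary technical point is the justification of monotonicity preservation in Part (2) within the viscosity framework.
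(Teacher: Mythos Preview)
Your strategy is correct but follows a genuinely different route from the paper. Both arguments pass to the integrated problem \eqref{IntegEq} and exploit comparison, but the barriers are built very differently.

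The paper does \emph{not} use the self-similar solution coming from Theorem~3.1. Instead it constructs an explicit $C^2$ subsolution
\[
\Phi_\epsilon(x,t)=(t+\tau)^{b\gamma}\bigl((|x|+\xi)^{-\gamma}+G(x)\bigr)-\epsilon,\qquad \gamma=\tfrac{m+2\alpha}{2-m},\ \alpha=1-s,
\]
where $G$ is a compactly supported auxiliary function with $(-\Delta)^{1-s}G\le -C|x|^{-(1+2(1-s))}$ on the relevant half-line. The restriction $m<2$ enters precisely through the positivity of $\gamma$. Comparison is then carried out via the \emph{parabolic} principle of Proposition~\ref{ComparPrinc}, which only requires $\Phi_\epsilon$ to be a classical subsolution on the exterior region $\{x<x_0\}$ together with ordering on the parabolic boundary $\{x\ge x_0\}$; this sidesteps any issue with discontinuous initial data and yields $v(x_1,t_1)\ge\Phi_\epsilon(x_1,t_1)>0$ for suitably chosen $\epsilon$.

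Your argument replaces this hand-built barrier by the primitive $\tilde V$ of the fundamental self-similar solution obtained through the FPME correspondence of Theorem~3.1(i), and then invokes the \emph{global} viscosity comparison principle with the Heaviside datum $MH(\cdot\pm R_0)$. This is cleaner and conceptually transparent---the fat tails of $\phi_2=c\phi_1^{\mf}$ are exactly what make $0<\tilde V<M$ everywhere---but it imports more machinery: you need (a) that the self-similar profile from Theorem~3.1(i) actually yields the unique weak solution of \eqref{eq:maineq} with datum $M\delta_0$ (so that Proposition~\ref{prop:equivalence} applies and $\tilde V$ is the viscosity solution with initial trace $MH$), and (b) that the comparison principle of \cite{ChassJakob} covers piecewise continuous initial data such as the Heaviside. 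Both are available here (Theorem~\ref{ThmUnique} and Proposition~\ref{prop:uniqinteg}), but you should make these invocations explicit rather than leave them implicit. The paper's barrier approach, by contrast, is self-contained and needs neither the transformation of Section~\ref{Section:RelatedModels} nor comparison against singular initial data; it also yields a somewhat more quantitative lower bound on $v$.

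For Part~(2) your outline matches the paper's: symmetry and radial monotonicity are inherited by uniqueness plus invariance, and combined with Part~(1) force strict positivity. The paper gives no more detail than you do on the monotonicity preservation, so your ``secondary technical point'' is shared.
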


\begin{remark}
Note that while in Theorem \ref{ThmFiniteProp} we need to assume that $u$ is the constructed weak solution, in Theorem \ref{ThmInfProp} the uniqueness result for dimension $N=1$ given by Theorem \ref{ThmUnique} ensures that this is the only weak solution. This fact also applies to Theorem \ref{ThmFiniteProp} when $N=1$.
\end{remark}


\subsection{Asymptotic behavior}

Once we know the uniqueness result  of Theorem \ref{ThmUnique} and the existence of solutions for finite measure data, we can prove that there exists a unique fundamental solution to Problem \eqref{eq:maineq}  and it  describes the large time asymptotic behavior of a general class of solutions.

\begin{theorem}[Asymptotic Behavior]\label{thm:main}
Let $m\in (1,\infty)$, $s\in(0,1)$ and $N=1$. Assume that $u_0\in L^1(\R)$ such that $\|u_0\|_{L^1(\R)}=M$ and let $u$ be the corresponding weak solution of \eqref{eq:maineq}. Then
\begin{equation*}
t^{\frac{N(1-\frac{1}{p})}{(m-1)N+2-2s}}\|u( \cdot, t)-U_M(\cdot, t )\|_{L^p(\R^N)}\to0 \quad \textup{as} \quad t \to \infty
\end{equation*}
for any $p>1$, where $U_M$ is the unique self-similar solution of \eqref{eq:maineq} with initial data $\mu= M\delta_0$.
\end{theorem}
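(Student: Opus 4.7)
The plan is to use a rescaling-and-compactness argument in the style of Kamin-Friedman-V\'azquez for PME asymptotics, using the integrated equation of Section \ref{sec:inteq} as the workhorse (which is why the argument is restricted to $N=1$). For $\lambda>0$, define the rescaled solution
\begin{equation*}
u_\lambda(x,t) = \lambda^{N\beta}\, u(\lambda^\beta x,\lambda t), \qquad \beta = \frac{1}{N(m-1)+2(1-s)},
\end{equation*}
which again solves \eqref{eq:maineq} with initial datum $u_{0,\lambda}(x)=\lambda^{N\beta}u_0(\lambda^\beta x)$, preserves mass, and satisfies $u_{0,\lambda}\rightharpoonup M\delta_0$ as $\lambda\to\infty$. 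A direct change of variables shows that the claimed rate is equivalent to the assertion $\|u_\lambda(\cdot,1)-U_M(\cdot,1)\|_{L^p(\R)}\to 0$ as $\lambda\to\infty$, since $U_M$ is itself invariant under this rescaling.

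The first substantive step is to identify $U_M$ unambiguously. Existence of a weak solution with initial datum $M\delta_0\in\mathcal{M}^+(\R)$ is furnished by Theorem \ref{ThmExistL1}. For uniqueness I would lift to the integrated version: the corresponding $V_M$ has the piecewise continuous initial datum $M\mathbf{1}_{[0,\infty)}$, so Proposition \ref{prop:uniqinteg} pins it down, and then $U_M=\partial_x V_M$. Scaling invariance of the datum $M\delta_0$ combined with this uniqueness forces $(U_M)_\lambda=U_M$, i.e. $U_M$ is genuinely self-similar.

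For the compactness step I would use the uniform bounds provided by Theorem \ref{ThmExistL1}: $\|u_\lambda(\cdot,t)\|_{L^1}=M$ and $\|u_\lambda(\cdot,t)\|_{L^\infty}\le C\,t^{-N\beta}M^{\delta}$, together with the second energy estimate \eqref{eq:secondenergy}. Passing to $v_\lambda(x,t)=\int_{-\infty}^x u_\lambda(y,t)\,dy$, Proposition \ref{prop:equivalence} makes each $v_\lambda$ a viscosity solution of \eqref{IntegEq}; the $L^\infty$ bound on $u_\lambda$ gives uniform spatial Lipschitz continuity of $v_\lambda$ (on $t\ge\tau>0$), and combining this with the $L^2$ control of $(v_\lambda)_t$ furnished by \eqref{eq:secondenergy} yields uniform H\"older continuity in time (the same $t^{1/3}$-style argument used in the proof of Proposition \ref{prop:equivalence}). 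Arzel\`a-Ascoli then extracts a subsequence $v_{\lambda_n}\to v_\infty$ locally uniformly on $\R\times(0,\infty)$. Stability of viscosity solutions under locally uniform convergence makes $v_\infty$ a viscosity solution of \eqref{IntegEq}, and the pointwise limit of $v_{0,\lambda}$ is exactly $M\mathbf{1}_{[0,\infty)}$, so Proposition \ref{prop:uniqinteg} identifies $v_\infty$ with $V_M$ and hence the subsequential limit of $u_{\lambda_n}(\cdot,1)$ with $U_M(\cdot,1)$. Since the limit is unique, the whole family converges.

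Finally, upgrading the topology: locally uniform convergence of $v_\lambda(\cdot,1)$ together with the uniform $L^\infty$ bound on $u_\lambda(\cdot,1)$ gives $u_\lambda(\cdot,1)\to U_M(\cdot,1)$ in $L^1_{\mathrm{loc}}(\R)$; nonnegativity plus equality of total masses $\int u_\lambda(\cdot,1)=M=\int U_M(\cdot,1)$ upgrades this to $L^1(\R)$ convergence via the Brezis-Lieb/Scheff\'e lemma; interpolation with the uniform $L^\infty$ bound yields $L^p$ convergence for every $p>1$. Undoing the rescaling with $\lambda=t$ produces the stated decay. The main obstacle throughout is the compactness in the integrated formulation, above all the uniform time-equicontinuity of $\{v_\lambda\}$ and the clean identification of the limiting step-function initial trace; once these are secured, viscosity uniqueness (Proposition \ref{prop:uniqinteg}) makes the rest soft. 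The restriction $N=1$ is essential precisely because uniqueness of $U_M$ (and of the integrated limit) is only available here.
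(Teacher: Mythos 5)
Your overall architecture coincides with the paper's: rescale by the self-similar exponents, extract a limit, identify it as the (unique, by the integrated/viscosity theory of Propositions \ref{prop:equivalence} and \ref{prop:uniqinteg}) solution with datum $M\delta_0$, deduce self-similarity from scaling invariance plus full-sequence convergence, and undo the rescaling with $\lambda=t$. The identification of $U_M$, the use of mass conservation to promote $L^1_{\mathrm{loc}}$ to $L^1(\R)$ convergence, and the interpolation with the smoothing effect to reach $L^p$ are all sound and essentially match the paper (the paper handles the tails by an explicit cutoff estimate rather than by mass conservation, but both work).

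There is, however, a genuine gap in your ``upgrading the topology'' step. You claim that locally uniform convergence of $v_\lambda(\cdot,1)$ together with the uniform $L^\infty$ bound on $u_\lambda(\cdot,1)=\partial_x v_\lambda(\cdot,1)$ yields $u_\lambda(\cdot,1)\to U_M(\cdot,1)$ in $L^1_{\mathrm{loc}}(\R)$. This implication is false: uniform convergence of the primitives plus an $L^\infty$ bound on the derivatives gives only weak-$*$ convergence of the derivatives (think of $v_n$ a sequence of nondecreasing functions converging uniformly to $x$ whose derivatives oscillate between $0$ and $2$ on finer and finer scales; the derivatives converge to $1$ only weakly). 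Nor can you then invoke Scheff\'e/Brezis--Lieb, since that requires a.e.\ (or at least strong local) convergence as input. What is missing is spatial compactness for $u_\lambda$ itself, not for its primitive. The paper supplies exactly this: the $L^p$ energy estimate \eqref{eq:lpenergy}, written for the rescaled family, gives a bound on $\int_{t_1}^{t_2}\int |(-\Delta)^{\frac{1-s}{2}}[u_\lambda^{\frac{m+p-1}{2}}]|^2$ uniform in $\lambda$, i.e.\ a uniform $H^{1-s}_{\mathrm{loc}}$ bound on a power of $u_\lambda$, which combined with the time regularity and the Rakotoson--Temam compactness criterion yields strong convergence $u_{\lambda_j}\to U_M$ in $L^2_{\mathrm{loc}}(\R\times[t_1,t_2])$; only then does the $L^1$-plus-interpolation endgame go through. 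Your argument via the integrated equation can still be used to identify the limit, but you must add the energy-estimate compactness (or some equivalent, e.g.\ a uniform fractional Sobolev or BV bound on $u_\lambda$) to convert weak into strong convergence.
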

 Notice that $U_M$ can be transformed into a self-similar solution of \eqref{FPME} (for $m<2$)  or \eqref{eq:model2} (for $m>2$) as explained in Section \ref{Section:RelatedModels}. In the first case $m<2$ this transformation  allows to obtain  the main properties of $U_M$ from the known properties of the Barenblatt solutions of the \eqref{FPME}, which are derived in \cite{Vaz14}. The precise decay for large $|x|$ of $U_M$ is given in \cite{StTeVa15}, Corollary 3.2.

\section{Proofs of the results}\label{Section:Proofs}

\subsection{Sketch of the proof of existence}

The proof of existence for general initial data given by an integrable measure is a long ride, with several nontrivial steps. In this section we will show the strategies of the proof, together with some details of the main ingredients of it.

We will first prove existence for $u_0\in L^1(\R^N)\cap L^\infty(\R^N)$ via a four steps approximation method by regularized versions of \eqref{eq:maineq}. In this first part we will also obtain very useful energy estimates that ensure compactness, together with an $L^1-L^\infty$ smoothing effect. Afterwards, using the smoothing effect we prove existence for initial data $\mu \in \mathcal{M}_+(\R^N)$ approximating $\mu$ by bounded integrable initial data.

\subsubsection{Existence for $u_0\in L^1(\R^N)\cap L^\infty(\R^N)$}

Formally, we consider an equivalent version of \eqref{eq:maineq} given by:
\begin{equation}\label{eq:maineqmod}
u_t= \nabla \cdot(u^{m-1} \nabla (-\Delta)^{-1} (-\Delta)^{1-s}u) \quad \textup{in} \quad \R^N.
\end{equation}
The idea is to consider a regularized version of \eqref{eq:maineqmod} where all the problematic terms are approximated. More precisely, we add the vanishing viscosity term  $\delta \Delta u$ to \eqref{eq:maineqmod} that ensures good properties of regularity for the solution, we eliminate the degeneracy at the zero level sets by putting $u^{m-1}\sim (u+\mu)^{m-1}$ and we eliminate the singular character of the fractional Laplacian  $(-\Delta)^{1-s}$ approximating it by the zero order pseudo-differential operator
\begin{equation}\label{eq:aproxop}
\mathcal{L}^{1-s}_\epsilon (u)(x)=C_{N,1-s}\int_{\RN}\frac{u(x)-u(y)}{\left(|x-y|^2+\epsilon^2\right)^{\frac{N+2-2s}{2}}}dy.
\end{equation}
Additionally, to ensure existence, we will restrict \eqref{eq:maineqmod} to a bounded domain $B_R$. The approximated problem reads
\[\label{ProblemEpsMuDeltaR}\tag{$P_{\epsilon\delta\mu R}$}
\left\{
\begin{array}{ll}
(U_1)_t= \delta \Delta U_1 +\nabla \cdot((U_1+\mu)^{m-1}\nabla (-\Delta)^{-1} \mathcal{L}^{1-s}_\epsilon [U_1])&\text{in }\ B_R \times (0,T),\\
U_1(x,0)=\widehat{u}_0(x) &\text{in } \ B_R,\\
U_1(x,t)=0 &\text{in }  \ B_R^c\times (0,T),
\end{array}
\right.
\]
depending on the parameters $\epsilon,\delta,\mu, R>0$. We also consider $\widehat{u}_0$ to be a smooth approximation of $u_0$. We say that $U_1$ is a weak solution of \eqref{ProblemEpsMuDeltaR} if

\begin{equation*}
\begin{split}
\int_0^T\int_{B_R} U_1(\phi_t-\delta \Delta \phi)dxdt&-\int_0^T\int_{B_R}  (U_1+\mu)^{m-1} \nabla (-\Delta)^{-1} \mathcal{L}^{1-s}_\epsilon [U_1]\cdot\nabla \phi dxdt\\
&+ \int_{B_R} \widehat{u}_0(x) \phi(x,0)dx=0
\end{split}
\end{equation*}
 for smooth test functions $\phi(x,t)$ that vanish on the spatial boundary $\partial B_R$ and for large $t$. Indeed, existence of smooth weak solutions is proved via mild solutions, i.e, $U_1$ is the fixed point of the following map:
$$
\mathcal{T}(v)(x,t)= e^{\delta t \Delta}u_0(x) + \int_0^t \nabla e^{\delta (t-\tau) \Delta} \cdot  G(v)(x,\tau)  d\tau,
$$
where $G(v)= (v+\mu)^{m-1}\nabla (-\Delta)^{-1} \mathcal{L}_\epsilon^s[v]$. The map,
 $$\mathcal{T}: C((0,T):L^1(B_R) \cap L^\infty(B_R)) \to  C((0,T):L^1(B_R) \cap L^\infty(B_R)) $$ is well defined and it is also a contraction,  thus, Banach contraction principle ensures existence of a fixed point. We refer to \cite{BilerImbertKarch} for a very similar proof in a slightly different context.

Once existence and regularity of the approximated problem are obtained, we provide the solution with the following energy-type estimates, that will give compactness that allow to pass to the limit in all the approximation parameters.

\noindent$\bullet$ $L^p$ \textbf{energy estimates for $1\leq p<\infty$.} For all $0<t<T$ we have that:
\begin{equation}\label{eq:Lpaprox}\begin{split}
\int_{B_R} U_1^p(x,t)dx+ &p (p-1) \int_0^t \int_{B_R}  |(\mathcal{L}^{1-s}_\epsilon)^{\frac{1}{2}} [\Psi (U_1)](x,s)|^2 dx ds\\
 &+ \frac{4(p-1)\delta}{p}\int_0^t \int_{B_R} \left|\nabla (U_1^{p/2})(x,s)\right|^2 dxds \leq \int_{B_R} u_0^p(x)dx
\end{split}
\end{equation}
where $\Psi(z)=\int_0^z \zeta^{\frac{p-2}{2}}(\zeta+\mu)^{\frac{m-1}{2}}d\zeta$. We want to mention that a crucial step in the derivation of \eqref{eq:Lpaprox} relies on the generalized version of the Stroock-Varopoulos Inequality:  Given $\psi: \R \to \mathbb{R}$ such that $\psi \in C^1(\R)$, $\psi' \ge 0$ and $\Psi$ such that  $\psi'=(\Psi')^2$ we have
\begin{equation*}
\begin{split}
\int_{\RN}\psi(w)\mathcal{L}^s_\epsilon [w] dx \ge \int_{\RN}\left|(\mathcal{L}^s_\epsilon)^{\frac{1}{2}}[\Psi (w)] \right|^{2} dx.
\end{split}
\end{equation*}

\noindent $\bullet$ \textbf{Second energy estimate.} For all $0<t<T$ we have that:
\begin{align*}
& \frac{1}{2} \int_{B_R}\left|\SqLop [U_1(t)]\right|^2 dx \\
&\quad  + \int_0^t \int_{B_R}(U_1+\mu)^{m-1} \left| \nabla (-\Delta)^{-1} \mathcal{L}^{1-s}_\epsilon [U_1]\right|^2 dx  dt \\
&\quad + \delta \int_0^t\int_{B_R}\left|(\mathcal{L}^{1-s}_\epsilon)^{\frac{1}{2}}[U_1]\right|^2  dx dt  \leq  \frac{1}{2} \int_{B_R}\left|\SqLop [u_0]\right|^2 dx.
\end{align*}

\noindent $\bullet$ \textbf{(Decay of total mass)} For all $0<t<T$ we have
$\displaystyle{
\int_{B_R}U_1(x,t)dx \le\int_{B_R}u_0(x)dx.
}$

\noindent $\bullet$ \textbf{($L^{\infty}$-estimate) } For all $0<t<T$ we have  $||U_1(\cdot,t)||_\infty\leq ||u_0||_\infty$.

 \medskip

By combining these energy estimates we are able to apply some suitable parabolic compactness theorems  to derive convergence of approximated solutions when the parameters of the approximations are passed to the limit step by step in the order:
$$ (P_{\epsilon\delta\mu R}) \stackrel{\epsilon \to 0}{\longrightarrow} (P_{\delta\mu R})\stackrel{R \to \infty}{\longrightarrow}  (P_{\delta\mu})  \stackrel{\mu\to 0}{\longrightarrow} (P_{\delta})\stackrel{\epsilon\to 0}{\longrightarrow} (P).$$

 \begin{remark}\begin{enumerate}
 \item[(i)] Notice that the fractional operator is always defined by extending the function by $0$ outside the ball $B_R$ in the first two problems of the approximation $(P_{\epsilon\delta\mu R})$ and $(P_{\delta\mu R})$.
This is a delicate aspect which needs to be properly justified. The functions $U_1, U_2$ are defined on a ball $B_R$ and extended by $0$ to $\RN \setminus B_R$. We are able to do this extension since $U_1,U_2 \in H^1_0(B_R)$ by \eqref{eq:Lpaprox} therefore they have the right decay at the boundary $\partial B_R$ that allows the extension by $0$.  This is also one of the reasons for which the term $\delta \Delta U$ in the approximating problems  is the last one passing to the limit.
\item[(ii)] The term with $\delta $ coefficient in the $L^p$ estimate \eqref{eq:Lpaprox} gives $H^1$ regularity, an essential information in using parabolic compactness criteria. Again, this motivates the $\delta \to 0$ limit to be the last one.
\end{enumerate}
\end{remark}

\textbf{Passing to the limit. } First limit is done as $\epsilon \to 0$ and is based on the compactness criteria of type Simon-Aubin-Lions \cite{simon} in the context of
\begin{equation*}
H^{1-s}_{\epsilon_0}(B_R)\subset L^2(B_R) \subset H^{-1}(B_R),
\end{equation*}
where $H^{1-s}_{\epsilon_0}(B_R)$ is the space associated to \eqref{eq:aproxop}, and thus the left hand side inclusion is compact. We conclude that the family of approximate solutions $\{U_1\}_{\epsilon>0}$ is relatively compact in $L^2(0,T:L^2(B_R))$ and we obtain that
$(U_1)_{\epsilon,\delta, \mu, R} \to (U_2)_{\delta, \mu, R}$ as $\epsilon \to 0$ in $L^2(0,T:L^2(B_R)),$ up to subsequences.

As usual, the limit $U_2$ is identified to be a weak solution of a limit problem, in this case $(P_{\delta\mu R})$. Moreover, $U_2$ will satisfy the corresponding energy estimates which are proved by passing to the limit as $\epsilon \to 0$ the estimates for $U_1$. The following two limits $R \to \infty$ and $\mu \to 0$ are similar using the same type of compactness criteria of Simon.

The novelty appears in the last limit as $\delta \to 0$ where the regularity given by $H_1$ term with $\delta$ coefficient is lost. Here we need to use  a different compactness criteria due to Rakotoson and Temam \cite{RakotosonTemam} which does not ask for such strong regularity assumptions as before. We conclude that the solution $U_4$ of $(P_\delta)$ satisfies
\begin{equation*}
U_4 \rightarrow u  \text{ as } \delta \to 0 \text{ in }L^2_{\textup{loc}}(\RN \times (0,T)).
\end{equation*}

In the end we prove that $u$ is a weak solution to Problem \eqref{eq:maineq} and it satisfies the corresponding energy estimates. We call this $u$ \emph{constructed weak solution} since there is no uniqueness theory available in $\RN$. For $N=1$ we prove in Section \eqref{Section:Uniqueness} that uniqueness holds in the class of weak solutions and therefore the $u$ we have constructed is indeed the weak solution to Problem \eqref{eq:maineq}.

An $L^p$-$L^\infty$ smoothing effect is proved by combining $L^p$ energy estimate \eqref{eq:lpenergy} with the Nash-Gagliardo-Nirenberg inequality (See Theorem 7.4 in \cite{StTeVa17}) for the function $u^{\frac{m+p+1}{2}}$. More precisely we get
\begin{equation}\label{smoothform}
\| u(\cdot,t)\|_{L^{\infty}(\RN)} \le C_{N,s,m,p} \, t^{-\gamma_p}\|u_0\|_{L^p(\RN)}^{\delta_p} \quad \textup{for all} \quad t>0,
\end{equation}
where
$\gamma_p=\frac{N}{(m-1)N+2p(1-s)}$, $\delta_p=\frac{2p(1-s)}{(m-1)N+2p(1-s)}$.

\subsubsection{Existence for initial data in $\mathcal{M}_+(\R^N)$}

The existence of a solution for measure data is done via an approximating problem with data $(u_0)_n \in L^1(\RN) \cap L^\infty(\RN)$ where $(u_0)_n \to \mu$ and it conserves the mass $\|(u_0)_n\|_{L^1(\R^N)} =\mu(\R^N)$.
More precisely, let $u_n$ be the solution to Problem \eqref{eq:maineq} with data
$$
(u_0)_n(x):=\int_{\R^{N}}\rho_n(x-z)d \mu(z).
$$
We use the smoothing effect \eqref{smoothform} for $L^1(\R^N)\cap L^\infty(\R^N)$ initial data in the particular case $p=1$. Then, as in the previous section $u_n$ satisfies the energy estimates plus the smoothing effect:
\[
\|u_n(\cdot,t)\|_{L^\infty(\R^N)}\leq \|u_n(\cdot,\tau)\|_{L^\infty(\R^N)} \leq C_{N,s,m} \, \tau^{-\gamma}\|(u_0)_n\|_{L^1(\RN)}^{\delta}=C_{N,s,m} \, \tau^{-\gamma}\mu(\R^N)^{\delta},\]
where
$\gamma=\frac{N}{(m-1)N+2(1-s)}$, $\delta=\frac{2(1-s)}{(m-1)N+2(1-s)}$. Note that the bound does not depend on the approximation parameter $n$. In a similar way as before, we derive compactness estimates and apply the Rakotoson-Temam criteria \cite{RakotosonTemam} in order to obtain a limit as $n \to \infty$ away from $t=0$
 \begin{equation*}
u_n\longrightarrow u^\tau \quad \text{as}\quad  n \to \infty \quad  \text{in} \quad L^2_{\text{loc}}(\R^N\times(\tau,T)).
\end{equation*}
We also show that the initial data is recovered. Basically, the second energy estimate given by \eqref{eq:secondenergy}  allows us to prove that for any test function $\phi$ we have that
\begin{equation*}
\begin{split}
\left|\int_0^\tau\int_{\RN} u_n^{m-1} \nabla (-\Delta)^{-s} u_n  \nabla \phi dx dt \right|\leq \Lambda (\tau)
\end{split}
\end{equation*}
for some modulus of continuity that only depends on $\phi$, $\mu$ and $s$. Thus,
\begin{equation*}
\begin{split}
 \left|\int_{\R^N}(u_n(\tau)-(u_0)_n)\phi dx \right|&=  \left|\int_0^\tau\int_{\R^N}\partial_t u_n\phi \,dx dt\right|\\&=\left|\int_0^\tau\int_{\RN} u_n^{m-1} \nabla (-\Delta)^{-s} u_n  \nabla \phi \,dx dt \right|\leq \Lambda(\tau).
\end{split}
\end{equation*}
A standard diagonal argument in $\tau$ and $n$ completes the proof of existence for measure data.

$\bullet$ \textbf{Conservation of mass} is proved by using the previous estimate with the sequence of cutoff type test functions
 $\phi_R(x)=\phi(x/R)$ with $0\le \phi\le 1$ and $\phi_1(x)=1$ for $|x|\le 1$ and such that $\|\nabla \phi_R\|_{L^\infty(\R^N)}= O(R^{-1})$.

\subsection{Sketch of the proof of speed of propagation}

The proof requires delicate barrier arguments since Problem \eqref{eq:maineq} is proved to have a lack of comparison principle. We refer to \cite{CVf1} for an explicit example of this fact.

\subsubsection{Finite speed of propagation for $m\in[2,\infty)$}
However, a special kind of super solutions (so-called true super-solutions), are of particular interest. We can show that, comparing any solution with a true super-solution, no contact point between them is possible. This will be enough to show the property of finite speed of propagation.

Without loss of generality, we assume that $0\leq u_0\leq1$ (thus, $0\leq u(x,t)\leq 1$) and consider the parabola-like function
\[
U(x,t)=((Ct-(|x|-b))_+)^2.
\]
where $b>0$ is such that $u_0(x)< U(x,0)=:U_0(x)$ for all $x\in B_b(0)$ and $C$ is a suitable constant to be chosen later.

We argue by contradiction at a possible first contact point $(x_c,t_c)$ between $u$ and $U$.  The fact that such a first contact point happens for $t>0$ and $x\ne \infty$ is justified by regularization. We also exclude the extreme case where the contact is made at the boundary of the support of $U$ given by $|x_f(t_c)|:=b+Ct_c$ (see Lemma 7.2 in \cite{StTeVa16}). Then, there exists $h>0$ such that $b+Ct_c-|x_c|=h>0$.  At $(x_c,t_c)$, we have that $u=U$, $\nabla (u-U)=0$, $\Delta(u-U) \le 0$, $(u-U)_t \ge 0,$ that is
$$
u(x_c,t_c)=h^2,\quad u_r=-2h, \quad \Delta u \le 2N, \quad u_t\ge 2Ch,
$$
where $r=|x|$ denotes the radial coordinate. We also have the following estimates on $\nabla p:=\nabla (-\Delta)^{-s}$ for $0<s<1/2$ (see Theorem 5.1. of \cite{CVf1}):
\begin{equation*}
-{p_r}(|x_c|,t_c) \le K_1 + K_2 h^{1+2s} +K_3 h, \quad \Delta p(|x_c|,t_c) \le K_4
\end{equation*}
for some $K_1,K_2,K_3,K_4\geq0$. We now use the expanded form of Problem \eqref{eq:maineq} given by $u_t=(m-1)u^{m-2}\nabla u \cdot \nabla p + u^{m-1}\Delta p$, we get the inequality
\begin{equation*}
\begin{split}
C&\le (m-1)h^{2m-4}\left(-{p_r}(|x_c|,t_c) + \frac{h}{2}\Delta p(|x_c|,t_c)\right)\\
&\leq (m-1)h^{2m-4}\left( K_1 + K_2h^{1+2s} + (K_3+\frac{K_4}{2})h\right) ,
\end{split}
\end{equation*}
which leads to a contradiction choosing $C=C(s,N)$ large enough.

When $1/2\leq s< 1$, an improved version on the estimate of $p_r$ leads to similar result, but this time $C=C(t)$. This is again enough to prove the property of finite speed of propagation, but this time, we do not have a quantitative estimate on the growth of the support.

One can easily see that the term $h^{2m-4}$ in the last estimate needs $m\geq2$ to create a contradiction. In fact $m=2$ is show to be the critical exponent, as we show in the following section.

\subsubsection{Infinite speed of propagation}
In dimension $N=1$, we have already established a duality between weak solutions of \eqref{eq:maineq}
\[
\partial_t u = \nabla \cdot (u^{m-1} \nabla (-\Delta)^{-s}u)
\]
and viscosity solutions of the ``integrated problem"
\begin{equation}\label{eq:viscprob}
\partial_t v= -|v_x|^{m-1}\lapal v ,
\end{equation}
where $v(x,t)=\int_{-\infty}^x u(x,t)dx$ and $\alpha=1-s$. It will be enough to consider the initial data given by
\begin{equation}\label{u0Heaviside}v_0(x)\ge H_{x_0}(x)=\left\{
  \begin{array}{ll}
    0, & \hbox{$x< x_0$,} \\[2mm]
  1, & \hbox{$x>x_0$.}
  \end{array}\right.\end{equation}
Indeed, \eqref{eq:viscprob} has suitable comparison principles for viscosity solutions. See Proposition 8.5 and Proposition 8.6 in \cite{StTeVa16} for a standard comparison principle and a parabolic type comparison principle respectively.  At this point, we need to find a subsolution $\Phi=\Phi(x,t)$ of \eqref{eq:viscprob} such that $\Phi(x,0)\leq v_0(x)$ and $\Phi(x,t)>0$ for any $t>0$ and $|x|$ arbitrary large. See Figures \ref{FigBarrier0} and \ref{FigBarrier1} for a graphic version of the proof.

\definecolor{ffqqqq}{rgb}{1,0,0}
\definecolor{qqttcc}{rgb}{0,0.2,0.8}
\definecolor{sqsqsq}{rgb}{0.125,0.125,0.125}

\definecolor{ffqqqq}{rgb}{1,0,0}
\definecolor{qqqqcc}{rgb}{0,0,0.8}
\definecolor{sqsqsq}{rgb}{0.125,0.125,0.125}

\begin{figure}[ht!]
\centering
\begin{tikzpicture}[line cap=round,line join=round,>=triangle 45,x=5.7cm,y=3cm]
\clip(-0.896,-0.128) rectangle (1.145,1.11);
\draw [line width=1.6pt,color=sqsqsq] (0,-0.128) -- (0,1.11);
\draw [line width=1.2pt,color=sqsqsq,domain=-0.896:1.145] plot(\x,{(-0-0*\x)/5.977});
\draw [dash pattern=on 1pt off 1pt,color=sqsqsq,domain=-0.896:1.145] plot(\x,{(--2-0*\x)/2});
\draw [line width=2.8pt,color=qqttcc,domain=-0.896473504307193:-0.27353490278664827] plot(\x,{(-0-0*\x)/-0.471});
\draw [dash pattern=on 1pt off 1pt,color=sqsqsq] (-0.274,0)-- (-0.274,1);
\draw [line width=2.8pt,color=qqttcc,domain=-0.27353490278664816:1.145157305834116] plot(\x,{(--0.844-0*\x)/0.844});
\draw [dash pattern=on 1pt off 1pt,color=sqsqsq,domain=-0.896:1.145] plot(\x,{(--0.092-0*\x)/-0.92});
\draw [shift={(-0.575,0.493)},line width=2pt,color=ffqqqq]  plot[domain=4.749:6.18,variable=\t]({1*0.579*cos(\t r)+0*0.579*sin(\t r)},{0*0.579*cos(\t r)+1*0.579*sin(\t r)});
\draw [line width=2pt,color=ffqqqq] (-0.554,-0.085)-- (-1.424,-0.09);
\draw [shift={(0.584,0.496)},line width=2pt,color=ffqqqq]  plot[domain=3.248:4.654,variable=\t]({1*0.587*cos(\t r)+0*0.587*sin(\t r)},{0*0.587*cos(\t r)+1*0.587*sin(\t r)});
\draw [line width=2pt,color=ffqqqq,domain=0.55:1.145157305834116] plot(\x,{(-0.087-0.005*\x)/0.989});
\draw (1.047,0.003) node[anchor=north west] {$\textcolor{black}{x}$};
\draw (0.1,0.24) node[anchor=north west] {$\textcolor{black}\Phi_\epsilon(x,0)$};
\draw (-0.561,0.15) node[anchor=north west] {$\textcolor{black}v_0(x)$};
\draw (-0.051,1.12) node[anchor=north west] {\textcolor{black}1};
\draw (-0.051,0.096) node[anchor=north west] {\textcolor{black}0};
\draw (-0.09,-0.025) node[anchor=north west] {$\textcolor{black}-\mathbb{\epsilon}$};
\begin{scriptsize}
\fill [color=sqsqsq] (-0.274,0) circle (1.5pt);
\draw[color=sqsqsq] (-0.245,-0.03) node {$x_0$};
\end{scriptsize}
\end{tikzpicture}
\caption{Comparison with the barrier at time $t=0$}
\label{FigBarrier0}
\end{figure}

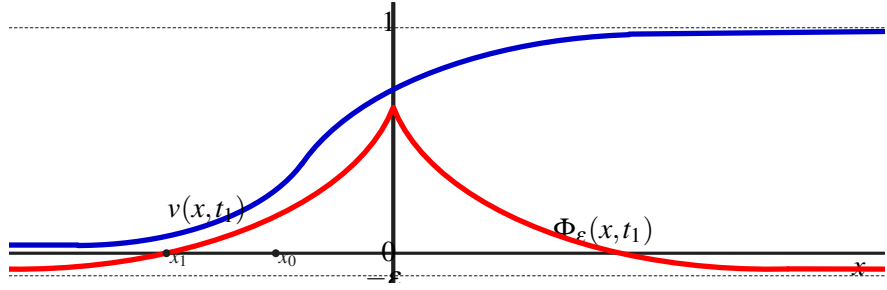
\begin{figure}
\centering
\begin{tikzpicture}[line cap=round,line join=round,>=triangle 45,x=5.7cm,y=3cm]
\clip(-0.896,-0.128) rectangle (1.145,1.11);
\draw [line width=1.6pt,color=sqsqsq] (0,-0.128) -- (0,1.11);
\draw [line width=1.2pt,color=sqsqsq,domain=-0.896:1.145] plot(\x,{(-0-0*\x)/5.977});
\draw [dash pattern=on 1pt off 1pt,color=sqsqsq,domain=-0.896:1.145] plot(\x,{(--2-0*\x)/2});
\draw [dash pattern=on 1pt off 1pt,color=sqsqsq,domain=-0.896:1.145] plot(\x,{(--0.092-0*\x)/-0.92});
\draw (1.047,0.003) node[anchor=north west] {$\textcolor{black}{x}$};
\draw (0.35,0.21) node[anchor=north west] {$\textcolor{black}\Phi_\epsilon(x,t_1)$};
\draw (-0.051,1.12) node[anchor=north west] {\textcolor{black}1};
\draw (-0.051,0.096) node[anchor=north west] {\textcolor{black}0};
\draw (-0.09,-0.025) node[anchor=north west] {$\textcolor{black}-\mathbb{\epsilon}$};
\draw [shift={(-0.74,0.598)},line width=2pt,color=qqqqcc]  plot[domain=4.721:5.917,variable=\t]({1*0.564*cos(\t r)+0*0.564*sin(\t r)},{0*0.564*cos(\t r)+1*0.564*sin(\t r)});
\draw [shift={(0.604,0.103)},line width=2pt,color=qqqqcc]  plot[domain=1.633:2.798,variable=\t]({1*0.868*cos(\t r)+0*0.868*sin(\t r)},{0*0.868*cos(\t r)+1*0.868*sin(\t r)});
\draw [line width=2pt,color=qqqqcc,domain=-0.896473504307193:-0.7353358406914835] plot(\x,{(-0.005-0*\x)/-0.138});
\draw [line width=2pt,color=qqqqcc,domain=0.55:1.145157305834116] plot(\x,{(--0.444--0.009*\x)/0.462});
\draw [shift={(-0.879,0.826)},line width=2pt,color=ffqqqq]  plot[domain=4.667:6.081,variable=\t]({1*0.897*cos(\t r)+0*0.897*sin(\t r)},{0*0.897*cos(\t r)+1*0.897*sin(\t r)});
\draw [shift={(0.879,0.826)},line width=2pt,color=ffqqqq]  plot[domain=3.343:4.759,variable=\t]({1*0.897*cos(\t r)+0*0.897*sin(\t r)},{0*0.897*cos(\t r)+1*0.897*sin(\t r)});
\draw [line width=2pt,color=ffqqqq,domain=0.92:1.145157305834116] plot(\x,{(-0.014-0.001*\x)/0.215});
\draw (-0.55,0.3) node[anchor=north west] {$\textcolor{black}v(x,t_1)$};
\begin{scriptsize}
\fill [color=sqsqsq] (-0.274,0) circle (1.5pt);
\draw[color=sqsqsq] (-0.245,-0.03) node {$x_0$};
\fill [color=sqsqsq] (-0.529,0) circle (1.5pt);
\draw[color=sqsqsq] (-0.499,-0.03) node {$x_1$};
\end{scriptsize}
\end{tikzpicture}\caption{Comparison with the barrier at time $t>0$} \label{FigBarrier1}
\end{figure}

We need to find this kind of subsolution. First, consider a function $G=G(x)$ such that $G$ is compactly supported in the interval $(-x_0,\infty)$, $G(x) \le C_1$ for all $x\in \mathbb{R}$ and $(-\Delta)^s G(x) \le -C_2|x|^{-(1+2s)}$ for all $x<x_0$ (see Lemma 9.1 in \cite{StTeVa16} for the existence of such a function $G$).
Now, given any $\tau,\xi,\epsilon>0$, we can find $C_2=C_2(N,s,\alpha,\tau)$ such that the function
\begin{equation*}
\Phi_\epsilon(x,t)=(t+\tau)^{b \gamma}\left((|x|+\xi)^{-\gamma}+G(x)\right)-\epsilon, \quad t\geq0, \ x\in \mathbb{R}.
\end{equation*}
satisfies
\begin{equation*}
(\Phi_\epsilon)_t  + |(\Phi_\epsilon)_x|^{m-1} \lapal \Phi_\epsilon \le 0 \quad \text{ for }x<x_0, \ t> 0
\end{equation*}
for $\displaystyle{\gamma=\frac{m+2\alpha}{2-m}}$ and $b=\frac{1}{m-1+2\alpha}$. The parameters $\gamma$ and $b$ are found in the study of self-similar solutions of \eqref{eq:viscprob}.

The main tool to finish the proof is given by the following parabolic comparison principle proved in \cite{StTeVa16}:
\begin{proposition}\label{ComparPrinc}
Let $m>1$, $\alpha \in (0,1)$ and $N=1$.  Let $v$ be a viscosity solution of Problem \eqref{IntegEq}-\eqref{initialv0}. Let  $\Phi: \mathbb{R}\times[0,\infty)\to \mathbb{R}$ such that $\Phi \in C^2(\Omega\times (0,T))$. Assume that
\begin{itemize}
  \item $\Phi_t +|\Phi_x|^{m-1}\lapal\Phi< 0 $ for $x\in \Omega$, $t\in [0,T]$;
  \item $\Phi(x,0) < v(x,0)$  for all $x\in \mathbb{R}$ (comparison at initial time);
  \item $\Phi(x,t) < v(x,t)$  for all $x \in \mathbb{R} \setminus \Omega$ and $t\in (0,T)$ (comparison on the parabolic boundary).
\end{itemize}
Then $\Phi(x,t) \le v(x,t)$ for all $x \in \mathbb{R}$, $t\in (0,T).$
\end{proposition}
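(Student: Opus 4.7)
The strategy is a standard viscosity contradiction argument adapted to the nonlocal setting, built on three observations: $\Phi$ is a \emph{strict} classical subsolution on $\Omega\times(0,T)$; $v$ enjoys the viscosity supersolution inequality; and the strict ordering on the parabolic boundary prevents contact from occurring there. Assume for contradiction that the set $E=\{(x,t)\in\mathbb{R}\times(0,T):\Phi(x,t)>v(x,t)\}$ is nonempty. The goal is to locate a first contact point at which $\Phi$ can be used as a test function from below for $v$, and then derive a contradiction with the strict subsolution inequality.

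\textbf{Step 1: Producing a first contact point.} To guarantee that the supremum of $\Phi-v$ is attained at a finite point before time $T$, I would perturb the candidate test function to
\[
\widetilde{\Phi}_{\eta}(x,t)=\Phi(x,t)-\frac{\eta}{T-t},\qquad \eta>0,
\]
which still satisfies $(\widetilde{\Phi}_\eta)_t+|(\widetilde{\Phi}_\eta)_x|^{m-1}(-\Delta)^{\alpha}\widetilde{\Phi}_\eta<0$ on $\Omega\times(0,T)$ (since we only subtract a positive quantity from the time derivative), and which still satisfies the two strict boundary inequalities. For $\eta$ small enough, the set $\widetilde{E}_\eta=\{(x,t):\widetilde{\Phi}_\eta>v\}$ is still nonempty; and since $\widetilde{\Phi}_\eta\to-\infty$ as $t\uparrow T$, while $\widetilde{\Phi}_\eta(\cdot,0)<v(\cdot,0)$, the first contact time
\[
t_c:=\inf\{t\in(0,T):\sup_{x\in\mathbb{R}}(\widetilde{\Phi}_\eta(x,t)-v(x,t))\ge 0\}
\]
lies in $(0,T)$. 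By continuity (upper semicontinuity of $\widetilde{\Phi}_\eta-v$ suffices, using that $v$ is lower semicontinuous as a supersolution), the supremum at $t=t_c$ is attained at some $x_c\in\mathbb{R}$ with $\widetilde{\Phi}_\eta(x_c,t_c)=v(x_c,t_c)$. The strict boundary condition $\Phi<v$ on $(\mathbb{R}\setminus\Omega)\times(0,T)$ forces $x_c\in\Omega$, so $\widetilde{\Phi}_\eta$ is $C^2$ near $(x_c,t_c)$.

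\textbf{Step 2: Applying the viscosity supersolution inequality.} By construction, $v-\widetilde{\Phi}_\eta\ge 0$ on $\mathbb{R}\times[0,t_c]$ with equality at $(x_c,t_c)$, i.e.\ $\widetilde{\Phi}_\eta$ touches $v$ from below at $(x_c,t_c)$ globally in space. Since $\widetilde{\Phi}_\eta\in C^2(\Omega\times(0,T))\cap L^\infty(\mathbb{R}\times(0,T))$ (one may replace $\widetilde{\Phi}_\eta$ by a bounded truncation outside a large ball if needed, using the strict ordering on the complement of $\Omega$ to match $v$ there without creating new contact points), it is an admissible test function in the Crandall--Lions definition. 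The supersolution property of $v$ therefore yields
\[
(\widetilde{\Phi}_\eta)_t(x_c,t_c)+|(\widetilde{\Phi}_\eta)_x(x_c,t_c)|^{m-1}(-\Delta)^{\alpha}\widetilde{\Phi}_\eta(x_c,t_c)\ge 0,
\]
where the nonlocal term is evaluated on the globally defined (bounded) $\widetilde{\Phi}_\eta$.

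\textbf{Step 3: Reaching the contradiction.} This directly contradicts the strict subsolution inequality at $(x_c,t_c)\in\Omega\times(0,T)$, which reads
\[
(\widetilde{\Phi}_\eta)_t(x_c,t_c)+|(\widetilde{\Phi}_\eta)_x(x_c,t_c)|^{m-1}(-\Delta)^{\alpha}\widetilde{\Phi}_\eta(x_c,t_c)<0.
\]
Hence $\widetilde{\Phi}_\eta\le v$ in $\mathbb{R}\times(0,T)$, and letting $\eta\downarrow 0$ gives $\Phi\le v$ in $\mathbb{R}\times(0,T)$. The main obstacle I anticipate is the compactness issue in Step~1: because the problem is posed on all of $\mathbb{R}$ and the fractional Laplacian is nonlocal, one must be careful that the candidate maximum point of $\widetilde{\Phi}_\eta-v$ does not escape to infinity and that the tail contributions in $(-\Delta)^{\alpha}\widetilde{\Phi}_\eta$ are well controlled; this is what forces the introduction of the penalty $\eta/(T-t)$ (and, if necessary, a spatial penalty that vanishes in a further limit) and a careful use of the parabolic boundary ordering to rule out contact outside $\Omega$.
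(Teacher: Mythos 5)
The paper does not actually prove Proposition \ref{ComparPrinc} here --- it is imported from \cite{StTeVa16} --- so I am judging your argument on its own terms. Your overall strategy (strict subsolution, first contact point in time after the penalization $\Phi-\eta/(T-t)$, use of $\widetilde{\Phi}_\eta$ as a test function from below, contradiction with the strict inequality in $\Omega$) is the standard and correct route, and it is the one the cited reference follows. However, two steps are asserted rather than proved, and both are exactly where the work lies.

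First, the attainment of the supremum of $\widetilde{\Phi}_\eta-v$ at the first contact time is not justified. The strict ordering on $(\mathbb{R}\setminus\Omega)\times(0,T)$ confines any contact point to $\Omega$, but in the intended application $\Omega=(-\infty,x_0)$ is unbounded, so a maximizing sequence can escape to $-\infty$ and $x_c$ need not exist. Your penalty $\eta/(T-t)$ only prevents contact at $t=T$; it does nothing for spatial compactness. One needs either a hypothesis of the form $\limsup_{|x|\to\infty}\bigl(\Phi(x,t)-v(x,t)\bigr)<0$ uniformly on $[0,T]$ (which does hold for the barrier $\Phi_\epsilon$, since $\Phi_\epsilon\to-\epsilon<0\le v$ as $|x|\to\infty$), or a vanishing spatial penalization --- and the latter perturbs $(-\Delta)^{\alpha}$, so its contribution to the equation must be estimated, not just invoked. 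Second, in Step 2 you modify $\widetilde{\Phi}_\eta$ outside a large ball to make it an admissible test function (it is only $C^2$ on $\Omega\times(0,T)$ and need not be bounded), but you never check the sign of the effect of this modification on the nonlocal term, and that sign is what closes the argument. The correct move is to choose $\varphi$ smooth and bounded with $\widetilde{\Phi}_\eta\le\varphi\le v$ away from $x_c$ and $\varphi=\widetilde{\Phi}_\eta$ near $x_c$; then $\varphi\ge\widetilde{\Phi}_\eta$ with equality at $x_c$ forces $(-\Delta)^{\alpha}\varphi(x_c,t_c)\le(-\Delta)^{\alpha}\widetilde{\Phi}_\eta(x_c,t_c)$, and only with this monotonicity does the supersolution inequality $\varphi_t+|\varphi_x|^{m-1}(-\Delta)^{\alpha}\varphi\ge0$ contradict $\Phi_t+|\Phi_x|^{m-1}(-\Delta)^{\alpha}\Phi<0$. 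A truncation in the other direction (lowering $\widetilde{\Phi}_\eta$ outside a ball, as "bounded truncation" suggests) increases the nonlocal term at $x_c$ and the contradiction does not follow. There is also a minor unaddressed point at $t=0$: the initial comparison is pointwise strict, and one must argue (using the lower semicontinuity of $v$ at $t=0$ from the supersolution definition, plus uniformity) that no contact occurs for small positive times.
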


At this point we need to show that $\Phi_\epsilon$ can be compared at initial time and also on the parabolic boundary.

 The initial data \eqref{u0Heaviside} naturally impose the following conditions on $\Phi_\epsilon$:
\begin{equation*}
 \xi > x_0+ \epsilon^{-\frac{1}{\gamma}}.
\end{equation*}
that ensures that $\Phi_\epsilon(x_0,0) <v_0(x_0)$. Now let $\displaystyle k_1 :=\min\{ v(x,t): \ x\ge x_0, \ 0< t\le T \}>0$ (we recall that $v\in C([0,T]:C(\R))$ and $v_0(x_0)=1$).
The condition
$
\Phi_\epsilon(x,t) < v(x,t)$ \text{ for all } $x\ge x_0, \ t\in [0,T]$
requires
$$
 (t+1)^{b \gamma} (\xi^{-\gamma}+C_1 ) < k_1.
$$
The maximum value of $t=T$ for which this inequality holds is
\begin{equation*}
T<  \left(\frac{k_1}{ \xi^{-\gamma}+C_1 } \right)^{1/b\gamma}-1.
\end{equation*}
Thus, in order to have $T>0$ we require
$
\xi > (k_1-C_1)^{-\frac{1}{\gamma}}.
$
The remaining parameter $C_1$ from assumption (G2) is chosen here such that: $C_1<k_1$. By Proposition \ref{ComparPrinc} we obtain the desired comparison:
$$
v(x,t) \ge \Phi_\epsilon(x,t) \quad \text{for all } (x,t) \in Q_T.
$$
Now, let $x_1 < x_0<0$ and $t_1 \in (0,T)$ be arbitrary. It is now straightforward to show that for
\begin{equation*}
\epsilon  < \left[\frac{(t_1+1)^b}{(k_1-C_1)^{-\frac{1}{\gamma}} -x_1}\right]^\gamma.
\end{equation*}
we have that $\Phi_\epsilon(x_1,t_1)>0$ and thus, by comparison $v(x_1,t_1)>0$. In this way, we have proves the following result:
\begin{theorem}[\textbf{Infinite speed of propagation for $v$}]
Assume that $u_0\in L^\infty(\R^N)$ is nonnegative and compactly supported. Let $v$ be the solution of Problem \eqref{IntegEq}-\eqref{initialv0}. Then $0<v(x,t)<M$ for all $t>0$ and $x\in \R$.
\end{theorem}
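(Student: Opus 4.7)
The goal is to prove the two-sided strict inequality $0 < v(x,t) < M$ everywhere on $\R \times (0,\infty)$. The positivity part $v>0$ is essentially assembled from the barrier construction laid out in the text immediately preceding the theorem; the upper bound $v<M$ will follow from a reflection-symmetry argument combined with uniqueness. The plan is therefore to (i) package the subsolution/comparison argument into a proof of $v(x,t)>0$ for arbitrary target points $(x_1,t_1)$, and (ii) reduce $v(x,t)<M$ to the positivity statement applied to a reflected solution.

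For part (i), fix an arbitrary target point $(x_1,t_1)$ with $t_1>0$ and $x_1<x_0$, where $x_0$ is chosen so that $v_0(x_0)=M$ (such an $x_0$ exists since $u_0$ is compactly supported). After rescaling by $M$ we may assume $M=1$. Build the candidate subsolution
\[
\Phi_\epsilon(x,t) = (t+1)^{b\gamma}\bigl((|x|+\xi)^{-\gamma} + G(x)\bigr) - \epsilon,
\]
with $\gamma=(m+2\alpha)/(2-m)$, $b=1/(m-1+2\alpha)$, and $G$ the auxiliary function from Lemma~9.1 of \cite{StTeVa16}. Select parameters in the order: $C_1<k_1$, then $\xi$ large so that $\Phi_\epsilon(\cdot,0) < v_0(\cdot)$ pointwise and $\Phi_\epsilon(x,t)<v(x,t)$ on the ``parabolic lateral boundary'' $\{x\ge x_0\}\times[0,T]$, then $T$ bounded as in the displayed estimate derived just before the theorem. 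Apply Proposition~\ref{ComparPrinc} on $\Omega=(-\infty,x_0)$ to conclude $v\ge \Phi_\epsilon$ on all of $\R\times(0,T)$. Finally, shrink $\epsilon$ so that $\Phi_\epsilon(x_1,t_1)>0$, which is possible under the condition displayed above the theorem; this yields $v(x_1,t_1)>0$. For $x_1\ge x_0$, strict positivity follows from the monotonicity of $v(\cdot,t)$ in $x$ together with positivity at $x_0$.

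For part (ii), define $\tilde v(x,t) := M - v(-x,t)$. A direct check using translation/reflection invariance of the fractional Laplacian together with $(-\Delta)^\alpha M = 0$ shows that $\tilde v$ satisfies exactly \eqref{IntegEq} with initial datum $\tilde v_0(x) = M - v_0(-x)$. This datum is nondecreasing, tends to $0$ at $-\infty$ and to $M$ at $+\infty$, and its distributional derivative is the nonnegative, bounded, compactly supported function $\tilde u_0(x)=u_0(-x)$, so $\tilde v$ fits into the framework. By Proposition~\ref{prop:uniqinteg}, $\tilde v$ is the unique viscosity solution of its initial value problem. Applying part (i) to $\tilde v$ yields $\tilde v(x,t)>0$ for all $x\in\R$, $t>0$, which is equivalent to $v(y,t)<M$ for all $y\in\R$, $t>0$.

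\textbf{Main obstacle.} The genuinely nontrivial content is the verification that $\Phi_\epsilon$ is a strict subsolution of \eqref{IntegEq} on $(-\infty,x_0)$. This rests on the estimate $(-\Delta)^sG(x)\le -C_2|x|^{-(1+2s)}$ for $x<x_0$, which provides precisely the negative nonlocal contribution needed to absorb residual error terms generated by the power-law tail $(|x|+\xi)^{-\gamma}$ (whose scaling matches a self-similar profile of the equation). This subsolution computation is already carried out in \cite{StTeVa16}; once granted, the rest of the plan is a clean combination of the parabolic comparison principle, the parameter tuning described above, and the reflection-symmetry reduction.
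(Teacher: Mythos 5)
Your proof of the lower bound $v>0$ is essentially the paper's own argument: the same barrier $\Phi_\epsilon$ built from the power tail $(|x|+\xi)^{-\gamma}$ plus the auxiliary function $G$, the same parameter tuning of $C_1$, $\xi$, $T$, $\epsilon$, and the same appeal to Proposition~\ref{ComparPrinc} on $\Omega=(-\infty,x_0)$. The only real difference is that you also prove the upper bound $v<M$ via the reflection $\tilde v(x,t)=M-v(-x,t)$, which indeed solves \eqref{IntegEq} with data obtained by integrating the compactly supported $u_0(-x)$; the paper's sketch asserts $v<M$ but only writes out the positivity half, so this is a correct and natural completion of the same approach (note that uniqueness is not even needed there, since the barrier argument applies to any viscosity solution with that initial datum).
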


The result for $u$ follows immediately. We have proved that $v(x,t)$ is positive for every $t>0$ and $x \in \R$, thus $u$ has accumulated mass at every $(x,t)$. This fact ensures that for every time $t>0$ there exists an $x\in \R$  arbitrary far from the origin such that $u(x,t)>0$. Moreover, when $u_0$ is radially symmetric and non-increasing in $|x|$ then $u$ inherits these properties, ensuring that $u$ can not take zero values.

\begin{remark} \begin{enumerate}
\item[(i)] This method is working only in one dimension since we use the integrated function. It is an  open issue the proof of infinite speed of propagation in dimension $N\ge 2$. New methods should be employed and, at least for particular cases of data, one can see a possible direction to continue: for instance radial data will produce radial solutions and then one could search for a suitable transformation between \eqref{eq:maineq} and a $1-D$ problem.
\item[(ii)]Infinite speed of propagation holds for $m<2$ for any self-similar solution as a consequence of the transformation formula from the previous section. Moreover their properties are imported from the alternative model \eqref{FPME}.
\end{enumerate}
\end{remark}

\subsection{Proof of the asymptotic behavior}

Here we provide the proof of   the asymptotic behavior in dimension $N=1$ using a four step method. This will be a new contribution to the study of Problem \eqref{eq:maineq}. The result can only be presented in dimension 1 due to the lack of uniqueness for \eqref{eq:maineq}. However, we will present intermediate steps valid in $\R^N$, and the reader could trivially see that the asymptotic behaviour result for general $N\geq1$ would follow from a result of uniqueness of solutions with Dirac delta type initial data.
\subsubsection{Existence of a rescaled solution}
\begin{lemma}\label{lem:ula}
Let $m\in(1,+\infty)$, $s \in (0,1)$ and $N\geq1$. Assume that $u_0\in L^1(\R^N)$ and let $u$ be the constructed weak solution of \eqref{eq:maineq} given by Theorem \ref{ThmExistL1}. Then, for any $\lambda>0$  the rescaled function
\begin{equation*}
\ula(x,t)=\lambda^Nu(\lambda x, \lambda^bt),
\end{equation*} with $b = (m-1)N + 2-2s$, is a weak solution of
\begin{equation}\label{eq:ulambda}
   \left\{ \begin{array}{ll}
  \partial_t \ula = \nabla \cdot (\ula^{m-1} \nabla (-\Delta)^{-s}\ula)    &\text{for } x \in \RN, \, t>0,\\[2mm]
  \ula(0,x)  =\lambda^Nu_0(\lambda x) &\text{for } x \in \RN.
    \end{array}  \right.
\end{equation}
Moreover, $\ula$ has the following properties:
\begin{enumerate}
\item \textbf{(Conservation of mass)} For all $0 < t < T$ we have
$\displaystyle{
\int_{\RN}\ula(x,t)dx= \int_{\RN}u_0(x)dx.
}$

\item \textbf{($L^p$ energy estimate)} For all $1<p<\infty$ and $0 <\tau< t < T$ we have
\begin{equation*}
\begin{split}\intr \ula^p(x,t)dx + \frac{4p(p-1)}{(m+p-1)^2}\int_\tau^t &\int_{\RN}\Big|(-\Delta)^{\frac{1-s}{2}} \left[\ula^{\frac{m+p-1}{2}}\right](x,s)\Big|^2dxds \\
&\le \intr \ula^p(x,\tau)dx .
\end{split}
\end{equation*}

\item \textbf{(Second energy estimate)} For all $0 < \tau<t < T$  we have
\begin{equation*}
\begin{split}
  \frac{1}{2} \intr \left|(-\Delta)^{-\frac{s}{2}} \ula(x,t)\right|^2 dx  +
&\int_\tau^t \intr  u^{m-1} \left| \nabla  (-\Delta)^{-s} \ula(x,s)\right|^2 dx  ds\\
& \le \frac{1}{2} \intr \left|(-\Delta)^{-\frac{s}{2}} \ula(x,\tau)\right|^2 dx.
\end{split}
\end{equation*}

\item \textbf{(Smoothing effect)} For all $t>0$,
\[
\| \ula(\cdot,t)\|_{L^{\infty}(\RN)} \le C_{N,s,m} \, t^{-\gamma} \|u_0\|_{L^1(\R^N)}^\delta
\]
 where $\gamma=\frac{N}{(m-1)N+2(1-s)}>0$ and $\delta=\frac{2(1-s)}{(m-1)N+2(1-s)}>0$.

\end{enumerate}

\end{lemma}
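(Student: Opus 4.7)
The plan is to deduce all properties of $u_\lambda$ from the corresponding properties of $u$ stated in Theorem \ref{ThmExistL1}, by a direct change of variables that exploits the scale invariance of equation \eqref{eq:maineq}. The exponent $b=N(m-1)+2-2s$ is precisely the one for which the transformation $u(x,t)\mapsto\lambda^Nu(\lambda x,\lambda^bt)$ preserves both the equation and the $L^1$-norm of the initial datum, so once the scaling relation is verified everything else follows by a mechanical substitution.

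First I would verify that $u_\lambda$ solves the PDE. Using that $(-\Delta)^{-s}$ picks up a factor $\lambda^{-2s}$ under a dilation by $\lambda$ in the spatial variable, a direct computation gives
\[
\nabla\cdot(u_\lambda^{m-1}\nabla(-\Delta)^{-s}u_\lambda)(x,t)=\lambda^{Nm+2-2s}\,[\nabla\cdot(u^{m-1}\nabla(-\Delta)^{-s}u)](\lambda x,\lambda^bt),
\]
while $\partial_t u_\lambda(x,t)=\lambda^{N+b}(\partial_tu)(\lambda x,\lambda^bt)$. Matching powers forces $N+b=Nm+2-2s$, i.e., the stated value of $b$. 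To make this rigorous at the level of Definition \ref{def1}, I would take an arbitrary test function $\phi\in C^1_c(\RN\times[0,T))$, change variables via $y=\lambda x$, $\tau=\lambda^bt$, and use $\phi_\lambda(y,\tau):=\phi(y/\lambda,\tau/\lambda^b)$ as a test function for $u$; the three terms in the weak formulation then transform by equal powers of $\lambda$ precisely when $b$ has the stated value, yielding the weak formulation for $u_\lambda$ with initial trace $\lambda^Nu_0(\lambda\cdot)$.

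Conservation of mass is immediate from the substitution $y=\lambda x$: $\int_{\RN}u_\lambda(x,t)\,dx=\int_{\RN}u(y,\lambda^bt)\,dy=\int_{\RN}u_0(y)\,dy$. The $L^p$ and second energy estimates follow in the same way: every space-time integral $\int_\tau^t\int_{\RN}F(u,\nabla(-\Delta)^{-s}u)\,dx\,dt$ appearing in Theorem \ref{ThmExistL1} transforms into the analogous integral for $u_\lambda$, with a total power of $\lambda$ that cancels because of the compatibility of the scaling exponents (the factor $\lambda^b$ from the time integration balances the spatial scaling of $u$, $u^{\frac{m+p-1}{2}}$, $(-\Delta)^{\frac{1-s}{2}}$, $(-\Delta)^{-s/2}$, etc.). Writing Theorem \ref{ThmExistL1} for $u$ on the interval $(\lambda^b\tau,\lambda^bt)$ and changing variables delivers the estimates for $u_\lambda$ on $(\tau,t)$.

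The only step that uses a genuine algebraic identity is the smoothing effect. From the smoothing effect for $u$ one gets
\[
\|u_\lambda(\cdot,t)\|_{L^\infty(\RN)}=\lambda^N\|u(\cdot,\lambda^bt)\|_{L^\infty(\RN)}\le C_{N,s,m}\,\lambda^N(\lambda^bt)^{-\gamma}\|u_0\|_{L^1(\RN)}^\delta,
\]
and one wants the prefactor to reduce to $t^{-\gamma}$ (the $L^1$-norm is automatically scale invariant). This requires $\lambda^{N-b\gamma}=1$ for every $\lambda>0$, i.e., $N=b\gamma$, which is built into the definition since $\gamma=N/((m-1)N+2(1-s))=N/b$. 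I do not expect any real obstacle: the entire lemma is an exercise in tracking scaling exponents, and the key conceptual point is precisely the identity $N=b\gamma$ that renders the rescaled family $\{u_\lambda\}_{\lambda>0}$ uniformly bounded in $L^\infty$ away from $t=0$ and uniformly controlled in the energy norms, which is exactly what is needed for the subsequent compactness argument in the proof of the asymptotic behaviour.
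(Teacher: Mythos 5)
Your proposal is correct and follows essentially the same route as the paper: verify the weak formulation under the rescaling by testing against $\phi(x,t)=\psi(\lambda x,\lambda^b t)$, and obtain properties 1--4 by changing variables in the corresponding estimates of Theorem \ref{ThmExistL1}. The only (harmless) variation is in the smoothing effect, where the paper applies the $L^1$--$L^\infty$ bound of Theorem \ref{ThmExistL1} directly to $u_\lambda$ as a weak solution with the same mass, whereas you rescale the bound for $u$ and check the identity $N=b\gamma$; both arguments are valid and rest on the same scaling relations.
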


Note that estimates 2 and 3 in Lemma \ref{lem:ula} are not uniform in $\lambda$ up to $\tau=0$ since the hypothesis $u_0\in L^1(\R^N)$ is not enough to find a uniform bound for the right hand side term. Note that $\ula$ only belongs to $L^1(\R^N)$ and $\ula(x,0)$ will converge to $\|u_0\|_{L^1(\R^N)}\delta_0$ as $\lambda \to \infty$. However, for any $\tau>0$, the smoothing effect ensures that $u_\lambda$ is bounded uniformly in $\lambda$ and then, the right hand side terms of estimates 2 and 3 can be bounded by the terms involving only $L^1$ norm of $u_0$. This kind of uniform estimates are very useful and will be given in more details later.

\begin{proof}  \textbf{I. $\ula$ is a weak solution of \eqref{eq:ulambda}. }Note that given any test function $\phi \in C_c^\infty(\R^N \times (0,T))$, we can define $\psi \in C_c^\infty(\R^N\times(0,\lambda^b T))$ such that $\phi(x,t)=\psi( \lambda x, \lambda^b t )$. Then, the first term in the weak formulation reads
\begin{equation*}
\begin{split}
\int_0^T\int_{\RN} \ula(x,t) \phi_t(x,t)\,dxdt&=\lambda^N\int_0^T\int_{\RN} u(\lambda x,\lambda^bt) \phi_t(x,t)\,dxdt\\
&=\lambda^{N+b}\int_0^T\int_{\RN} u(\lambda x,\lambda^bt) \psi_t(\lambda x,\lambda^bt)\,dxdt\\
&=\int_0^{\lambda^bT}\int_{\RN} u(y,s) \psi_s(y,s)\,dyds.
\end{split}
\end{equation*}
The second term is as follows:
\begin{equation*}
\begin{split}
\int_0^T\int_{\RN}  &\ula^{m-1}(x,t) \nabla (-\Delta)^{-s} \ula(x,t) \cdot\nabla \phi(x,t) \,dxdt\\
&=\lambda^{N(m-1)+N}\int_0^T\int_{\RN}  u^{m-1}(\lambda x,\lambda^b t) \nabla (-\Delta)^{-s}[u(\lambda\cdot, \lambda^b t )] \cdot\nabla \phi(x,t) \,dxdt\\
&=\lambda^{Nm+1-2s}\int_0^T\int_{\RN}  u^{m-1}(\lambda x,\lambda^b t) \nabla (-\Delta)^{-s}u(\lambda x, \lambda^b t ) \cdot\nabla \phi(x,t) \,dxdt\\
&=\lambda^{Nm+2-2s}\int_0^T\int_{\RN}  u^{m-1}(\lambda x,\lambda^b t) \nabla (-\Delta)^{-s}u(\lambda x, \lambda^b t ) \cdot\nabla \psi(\lambda x,\lambda^bt) \,dxdt\\
&= \lambda^{N(m-1)+2-2s-b}\int_0^{\lambda^bT}\int_{\RN}  u^{m-1}(y,s) \nabla (-\Delta)^{-s}u(y, s ) \cdot\nabla \psi(y,s) \,dyds\\
&=\int_0^{\lambda^bT}\int_{\RN}  u^{m-1}(y,s) \nabla (-\Delta)^{-s}u(y, s ) \cdot\nabla \psi(y,s) \,dyds.
\end{split}
\end{equation*}
Finally  the initial condition is reformulated as
\begin{equation*}
\int_{\RN} \ula(x,0) \phi(x,0)dx=\lambda^N\int_{\RN} u_0(\lambda x) \phi(x,0)dx=\int_{\RN} u_0(y) \psi(y,0)dy,
\end{equation*}
which concludes the proof of (I).

\textbf{II. $\ula$ has conservation of mass independent of $\lambda$. } Since $u$ preserves the mass, we have that
\begin{equation*}
\int_{\R^N}\ula(x,t)dx=\lambda^N\int_{\R^N}u(\lambda x,\lambda^bt)dx=\int_{\R^N}u(y,\lambda^bt)dx=\int_{\R^N}u_0(x)dx.
\end{equation*}

\textbf{III. Energy estimates.} The energy estimates are obtained by similar scaling arguments using the energy estimates available for $u$.

\textbf{IV. $\ula$ has smoothing effect uniform in $\lambda$. } Since $\ula$ is a weak solution of \eqref{eq:maineq} we can use the smoothing effect of Theorem \ref{ThmExistL1} together with the result of conservation of mass independent of $\lambda$ to get
\begin{equation*}
\| \ula(\cdot,t)\|_{L^{\infty}(\RN)} \le C_{N,s,m} \, t^{-\gamma}\|\ula(x,0)\|_{L^1(\RN)}^{\delta}=C_{N,s,m} \, t^{-\gamma}\|u_0\|_{L^1(\RN)}^{\delta} \quad \textup{for all } \, t>0.
\end{equation*}

\end{proof}

\subsubsection{Convergence of the rescaled solution as $\lambda\to\infty$}
\begin{lemma}\label{lem:ulalimit}
Let $m\in(1,+\infty)$,  $s \in (0,1)$ and $N\geq1$. Assume that $u_0\in L^1(\R^N)$ such that $\|u_0\|_{L^1(\R^N)}=M$. Let also $\ula$ defined as in Lemma \ref{lem:ula}. Then,  for any $0<t_1<t_2<\infty$, there exists a function $U_M\in L^1(\R^N\times(0,\infty))$ and a subsequence $\{\lambda_j\}_{j=1}^\infty$ such that
\begin{equation}\label{Lpconv}
u_{\lambda_j} \to U_M  \quad \textup{as} \quad \lambda_j \to \infty \quad \textup{in} \quad L^p(\R^N\times [t_1,t_2]) \quad \textup{for} \quad 1\leq p <\infty,
\end{equation}
where $U_M$ is a weak solution of \eqref{eq:maineq} with measure initial data $U_M(x,0)=M \delta_0$ and it satisfies the properties 1-5 of Theorem \ref{ThmExistL1}. Moreover, in dimension $N=1$, the full sequence $\ula$ converge in the sense of \eqref{Lpconv}.
\end{lemma}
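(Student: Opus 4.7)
The plan is to extract a convergent subsequence from $\{\ula\}_{\lambda>0}$ on any strip $\R^N \times [t_1,t_2]$ via a parabolic compactness argument, identify the limit $U_M$ as a weak solution of \eqref{eq:maineq} with initial datum $M\delta_0$, and in dimension $N=1$ invoke the uniqueness result of Theorem \ref{ThmUnique} to upgrade subsequential convergence to convergence of the full family. As a first step I would establish uniform bounds: by the smoothing effect of Lemma \ref{lem:ula}(4), $\|\ula(\cdot,t)\|_{L^\infty(\R^N)} \le C_{N,s,m}\,t^{-\gamma} M^{\delta}$ uniformly in $\lambda$, so $\{\ula\}_\lambda$ is uniformly bounded in $L^\infty(\R^N\times[t_1,t_2])$. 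Combined with the $\lambda$-independent conservation of mass, interpolation yields a uniform bound in $L^\infty(t_1,t_2;L^p(\R^N))$ for every $p \in [1,\infty]$.

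Next I would obtain compactness in space and time. Fix $\tau_0=t_1/2$. The $L^p$ energy estimate of Lemma \ref{lem:ula}(2) between $\tau_0$ and $t_2$, whose right-hand side is uniformly controlled thanks to the $L^\infty$ smoothing at $\tau_0$, yields a uniform bound on $(-\Delta)^{(1-s)/2}[\ula^{(m+p-1)/2}]$ in $L^2((\tau_0,t_2)\times\R^N)$, hence local spatial compactness of $\ula$ via the fractional Rellich embedding and the uniform $L^\infty$ bound. For temporal equicontinuity, the flux form of the equation together with the second energy estimate of Lemma \ref{lem:ula}(3) shows that $\ula^{m-1}\nabla(-\Delta)^{-s}\ula$ is uniformly bounded in $L^2((\tau_0,t_2)\times\R^N)$ (using the uniform $L^\infty$ bound on $\ula^{(m-1)/2}$), so $\partial_t\ula$ is uniformly bounded in $L^2(\tau_0,t_2;H^{-1}_{\mathrm{loc}}(\R^N))$. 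A Simon--Aubin--Lions argument, or equivalently the Rakotoson--Temam criterion invoked in the existence proof, then extracts a subsequence $u_{\lambda_j}\to U_M$ strongly in $L^2_{\mathrm{loc}}(\R^N\times[t_1,t_2])$; interpolation with the uniform $L^\infty$ bound upgrades this to $L^p$ convergence for every $1\le p<\infty$.

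Third, I would pass to the limit in the weak formulation of \eqref{eq:maineq} along the subsequence. Strong $L^p$ convergence of $u_{\lambda_j}$ combined with weak $L^2$ convergence of the weighted pressure gradient $u_{\lambda_j}^{(m-1)/2}\nabla(-\Delta)^{-s}u_{\lambda_j}$ coming from the second energy estimate, together with the uniform $L^\infty$ bound, is enough to pass to the limit in the nonlinear flux term. Lower semicontinuity of the energy functionals preserves the bounds of Theorem \ref{ThmExistL1} for $U_M$. To identify the initial datum, observe that $\ula(x,0)\,dx = \lambda^N u_0(\lambda x)\,dx \rightharpoonup M\delta_0$ weakly in $\mathcal{M}^{+}(\R^N)$ as $\lambda\to\infty$ by mass conservation and concentration at the origin; combined with the decay-of-time-shift estimate derived from the second energy estimate (exactly as in the proof of Proposition \ref{prop:equivalence}), which survives the limit uniformly in $\lambda$, one verifies that $U_M$ satisfies Definition \ref{def1} with initial measure $M\delta_0$.

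Finally, in dimension $N=1$, Theorem \ref{ThmUnique} identifies $U_M$ as the unique weak solution with datum $M\delta_0$, so a standard subsequence-of-subsequences argument promotes subsequential convergence to convergence of the full family $\{\ula\}_{\lambda>0}$. The main obstacle I expect is the passage to the limit in the nonlinear flux $\ula^{m-1}\nabla(-\Delta)^{-s}\ula$: the pressure gradient is only weakly compact in $L^2$, weighted by a factor that must itself be passed to the limit, so one needs a careful product-of-convergences argument pairing strong $L^p$ compactness of $\ula$ with weighted weak $L^2$ convergence of $\ula^{(m-1)/2}\nabla(-\Delta)^{-s}\ula$. A secondary subtlety is the recovery of the Dirac initial datum in the sense of Definition \ref{def1}, which hinges on the time-shift bound remaining uniform in $\lambda$ as $t\to 0^+$ despite the degeneration of the smoothing and energy estimates at $t=0$.
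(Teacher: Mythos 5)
Your overall architecture matches the paper's: uniform bounds from the smoothing effect and conservation of mass, parabolic compactness (the paper also delegates this to the Rakotoson--Temam/Simon machinery used in the existence proof of Theorem 5.2 of \cite{StTeVa17}) to get strong $L^2_{\textup{loc}}$ convergence of a subsequence, identification of the limit as a weak solution with datum $M\delta_0$, and uniqueness in $N=1$ to promote the subsequence to the full family. However, there is one genuine gap. In your second step you claim that strong convergence in $L^2_{\textup{loc}}(\R^N\times[t_1,t_2])$ together with the uniform $L^\infty$ bound ``upgrades to $L^p$ convergence for every $1\le p<\infty$.'' Interpolation only yields $L^p_{\textup{loc}}$ convergence; the lemma asserts convergence in $L^p(\R^N\times[t_1,t_2])$, global in space. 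Local convergence plus uniform $L^1\cap L^\infty$ bounds does not prevent mass from escaping to spatial infinity (consider $f_j=\mathbf{1}_{[j,j+1]}$, which tends to $0$ in $L^2_{\textup{loc}}(\R)$ but not in $L^1(\R)$), so an additional, uniform-in-$\lambda$ tail estimate is indispensable.

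This is precisely Step II of the paper's proof: testing the weak formulation with a cutoff $\phi_R(x)=\phi(x/R)$ supported in $\{|x|>R\}$, one bounds $\int_{|x|>2R}\ula(x,t)\,dx$ by $\int_{|y|>R}u_0(y)\,dy$ (using the scaling of the initial data, valid for $\lambda>1$) plus a flux term controlled by $\|\nabla\phi_R\|_\infty\int_0^t\!\!\int \ula^{m-1}|\nabla(-\Delta)^{-s}\ula|\,dx\,dt\le \Lambda(t)/R$, with $\Lambda$ independent of $\lambda$ thanks to the second energy estimate and the smoothing effect. Both contributions vanish as $R\to\infty$ uniformly in $\lambda$, the same bound passes to $U_M$, and only then does the decomposition into $B_R$ and $B_R^c$ give global $L^1$ convergence, after which your interpolation with the $L^\infty$ bound correctly yields $L^p$ for all finite $p$. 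The rest of your argument (flux passage via strong-times-weak convergence, recovery of the Dirac datum through the $\lambda$-uniform time-shift estimate, and the subsequence-of-subsequences argument in $N=1$) is consistent with the paper, and your two flagged obstacles are real, but the missing tail control is the one step without which the stated conclusion does not follow.
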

\begin{proof}
\textbf{I. Existence of a limit. }Estimates 1-3 from Lemma \ref{lem:ula} are enough to follow the same proof of Theorem 5.2 in \cite{StTeVa17}. We get that (up to a subsequence),
\[
u_{\lambda_j} \to U_M  \quad \textup{as} \quad \lambda_j \to \infty \quad \textup{in} \quad L^2_{\textup{loc}}(\R^N\times [t_1,t_2])
\]
where $U_M$ is a weak solution of \eqref{eq:maineq} with initial data $U_M(x,0)=M \delta_0$ with all the desired properties. Now we need to have a uniform control of the tails of the solutions in order to be able to pass from local convergence in $L^2$ to global convergence in any $L^p$. Moreover, in dimension $N=1$, uniqueness of weak solutions ensures that the full sequence $\ula$ converges.

\textbf{II. Tail control. }
Let $\phi \in C^\infty(\R^N)$ be a nondecreasing function such that $\phi(x)=0$ if $|x|<1$ and $\phi(x)=1$ if $|x|>2$. Now we take $\phi_R(x):=\phi(x/R)$ as test function (after an approximation argument) to get
\begin{equation*}
\begin{split}
&\int_{\R^N} \ula (x,t) \phi_R(x)dx\\
&\leq \int_{\R^N} \ula (x,0) \phi_R(x)dx - \int_{\R^N}\int_0^t \ula^{m-1}(x,t) \nabla (-\Delta)^{-s} \ula(x,t) \cdot \nabla \phi_R(x)dx=I+II.
\end{split}
\end{equation*}
First, we note that since $u_\lambda \geq0$, then
\begin{equation*}
\int_{\R^N} \ula (x,t) \phi_R(x)dx=\int_{|x|>R} \ula (x,t) \phi_R(x)dx\geq\int_{|x|>2R} \ula (x,t) dx.
\end{equation*}
On the other hand, for $\lambda>1$ we have
\begin{equation*}
\int_{\R^N} \ula (x,0) \phi_R(x)dx\leq \lambda^N\int_{|x|>R} u_0 (\lambda x) dx=\int_{|y|>\lambda R} u_0 (y) dy\leq \int_{|y|>R} u_0 (y) dy
\end{equation*}
and the last term clearly goes to zero as $R\to \infty$ since $u_0\in L^1(\R^N)$. We also have, as in part III in the proof of Theorem 5.2 in \cite{StTeVa17}, that
\begin{equation*}
\begin{split}
|II| \leq\|\nabla \phi_R\|_{\infty}\int_0^t\int_{\RN} \ula^{m-1}(x,t) |\nabla (-\Delta)^{-s} \ula(x,t)|dx dt\leq \Lambda (t) /R
\end{split}
\end{equation*}
where $\Lambda$ is a locally bounded function in $t$. Combining the above estimates, we conclude that
$\int_{|x|>2R} \ula (x,\tau) dx \to 0$ as $R\to \infty$ for all $\tau\in(0,t)$. Passing to the limit, the same estimate is inherited by $U_M$. A similar tail control argument has been used by one of the authors in \cite{IgnatStan} for a fractional diffusion-convection equation.

\textbf{III. Convergence in $L^p(\R^N \times[t_1,t_2])$. } First, we prove  $L^1$ convergence. From step II in this proof, for any $\epsilon>0$ we can choose $R$ large enough such that
\[
\begin{split}
I_{B_R^c}&=\int_{t_1}^{t_2}\int_{B_R^c}|u_{\lambda_j}(x,t)-U_M(x,t)|dxdt\\
&=\int_{t_1}^{t_2}\int_{B_R^c}|u_{\lambda_j}(x,t)|dxdt+\int_{t_1}^{t_2}\int_{B_R^c}| U_M(x,t)|dxdt<\epsilon/2.
\end{split}
\]
On the other hand,
\begin{equation*}
\begin{split}
I_{B_R}&=\int_{t_1}^{t_2}\int_{B_R}|u_{\lambda_j}(x,t)-U_M(x,t)|dxdt\\
&\leq |B_R|(t_2-t_1)\left(\int_{t_1}^{t_2}\int_{B_R}|u_{\lambda_j}(x,t)-U_M(x,t)|^2dxdt\right)^{\frac{1}{2}}.
\end{split}
\end{equation*}
Since $u_{\lambda_j} \to U_M$ \textup{as}  $\lambda_j \to \infty$  \textup{in} $L^2_{\textup{loc}}(\R^N\times [t_1,t_2])$, we can now choose $\lambda_j$ big enough such that $I_{B_R}<\epsilon/2$. In this way,
\begin{equation*}
\|u_{\lambda_j} - U_M\|_ {L^1(\R^N\times [t_1,t_2])}\leq I_{B_R}+I_{B_R^c}<\epsilon
\end{equation*}
which concludes the proof of $L^1$ convergence. By the smoothing effect, both $\ula$ and $U_M$ are uniformly bounded outside $t=0$, for any $R>0$ we have that
\begin{equation*}
\begin{split}
\|u_{\lambda_j} - U_M\|^p_ {L^p(\R^N\times [t_1,t_2])}&\leq \|u_{\lambda_j} - U_M\|^{p-1}_ {L^\infty(\R^N\times [t_1,t_2])}\|u_{\lambda_j} - U_M\|_ {L^1(\R^N\times [t_1,t_2])}\\
&\leq C\|u_{\lambda_j} - U_M\|_ {L^1(\R^N\times [t_1,t_2])}
\end{split}
\end{equation*}
which again converges to 0 as $\lambda_j\to \infty$.
\end{proof}

\subsubsection{Self-similarity of the limit solution}
\begin{lemma}[Existence of self-similar solution]\label{LemmaSelfSim}
Let $m\in(1,+\infty)$, $s \in (0,1)$, $N=1$. The solution $U_M$ constructed in Lemma \ref{lem:ulalimit} is a selfsimilar solution of the form
\begin{equation*}
U_M(x,t)=t^{-\alpha}\phi(xt^{-\beta}),
\end{equation*}
for a certain function $\phi\in \R^N \to \R$ and $\alpha=N\beta$ with $\beta=1/(N(m-1)+2-2s)$.
\end{lemma}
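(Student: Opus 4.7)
The strategy is to combine the scaling invariance of equation \eqref{eq:maineq} with the one-dimensional uniqueness theorem (Theorem \ref{ThmUnique}) to show that $U_M$ coincides with its own rescaling, and then to read off the self-similar form.

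First I would observe that the rescaling $(U_M)_\lambda(x,t) := \lambda^N U_M(\lambda x, \lambda^b t)$ from Lemma \ref{lem:ula} is well defined for every weak solution of \eqref{eq:maineq}: the calculation in part I of the proof of that lemma is a purely algebraic manipulation of the weak formulation through a change of variables in the test function, and it does not rely on $u_0$ being in $L^1(\R^N)$. Applied to $U_M$, it produces again a weak solution of \eqref{eq:maineq}, and the corresponding initial trace must be identified in the sense of Definition \ref{def1}. For the measure initial datum $\mu = M\delta_0 \in \mathcal{M}^+(\R^N)$, the computation
\begin{equation*}
\int_{\R^N} \phi(x,0)\, d\mu_\lambda(x) \;=\; \int_{\R^N} \phi(y/\lambda, 0)\, d(M\delta_0)(y) \;=\; M\phi(0,0)
\end{equation*}
shows that the rescaled initial measure is again $M\delta_0$; the Dirac mass is invariant under the mass-preserving dilation in $\R^N$. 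Hence $(U_M)_\lambda$ and $U_M$ are two weak solutions of \eqref{eq:maineq} in dimension $N=1$ sharing the finite Radon measure initial datum $M\delta_0$.

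By Theorem \ref{ThmUnique} these two solutions must coincide, yielding the scaling identity
\begin{equation*}
U_M(x,t) \;=\; \lambda\, U_M(\lambda x, \lambda^b t) \qquad \text{for all } x\in\R,\ t>0,\ \lambda>0,
\end{equation*}
with $b = (m-1) + 2 - 2s$. Choosing $\lambda = t^{-1/b} = t^{-\beta}$, so that $\lambda^b t = 1$, and defining $\phi(y) := U_M(y,1)$, one obtains
\begin{equation*}
U_M(x,t) \;=\; t^{-\beta}\phi(t^{-\beta} x) \;=\; t^{-\alpha}\phi(x t^{-\beta}),
\end{equation*}
with $\alpha = N\beta = \beta$ since $N=1$, which is exactly the asserted self-similar form.

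The only genuinely subtle point is the identification of the initial trace of the rescaled solution in the sense of Definition \ref{def1}; this reduces to the homogeneity of the Dirac mass under dilations and poses no real difficulty. The restriction to $N=1$ enters only through the use of Theorem \ref{ThmUnique}: the scaling argument itself is dimension-independent, so the same proof would yield self-similarity in any dimension as soon as uniqueness of weak solutions with Dirac-type initial data became available, matching the authors' earlier remark.
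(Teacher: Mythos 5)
Your proposal is correct, but it reaches the scaling identity by a different mechanism than the paper. The paper's proof never applies the uniqueness theorem to $U_M$ itself: it exploits the fact, established in Lemma \ref{lem:ulalimit}, that in dimension one the \emph{full} family $u_\lambda$ converges to $U_M$, and then obtains
$\lambda_0^N U_M(\lambda_0 x,\lambda_0^b t)=\lim_{\lambda\to\infty}(\lambda_0\lambda)^N u(\lambda_0\lambda x,(\lambda_0\lambda)^b t)=U_M(x,t)$
by re-indexing the limit along $\lambda_0\lambda$ --- this is exactly why the authors add a remark stressing that full-sequence convergence is the crucial ingredient. You instead observe that the rescaling $(U_M)_\lambda$ is again a weak solution, check that the initial trace $M\delta_0$ is invariant under the mass-preserving dilation, and invoke Theorem \ref{ThmUnique} to force $(U_M)_\lambda=U_M$ directly; the choice $\lambda=t^{-\beta}$ is then identical in both arguments. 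The two routes are logically close (the paper's full-sequence convergence is itself a consequence of the same uniqueness theorem), but yours is the more classical ``the fundamental solution is self-similar by uniqueness'' argument: it is self-contained, does not need the convergence of the whole family $u_\lambda$, and makes completely explicit where the hypothesis $N=1$ enters --- at the cost of having to justify that the rescaled function is a weak solution with Dirac initial datum in the sense of Definition \ref{def1}, a point you correctly identify and dispose of. The paper's route avoids that initial-trace verification because the rescaled object is realized as a limit of the already-constructed approximations. Both arguments would extend verbatim to $N\ge 2$ given uniqueness for Dirac data, as you note.
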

\begin{proof} First we note that for $b=N(m-1)+2-2s$, $U_M$ is invariant under the following scaling
\begin{equation}\label{eq:rescaled}
\begin{split}
\lambda_0^NU_M(\lambda_0x,\lambda_0^bt)&=\lim_{\lambda\to\infty}\lambda_0^N\ula(\lambda_0x,\lambda_0^bt)=\lim_{\lambda\to\infty}(\lambda_0\lambda)^Nu(\lambda_0\lambda x,(\lambda_0\lambda)^bt)\\
&=\lim_{\lambda\lambda_0\to\infty}u_{\lambda_0\lambda}( x,t)=U_M(x,t).
\end{split}
\end{equation}
Thus, since $b\beta=1$ and then, we choose $\lambda_0=t^{-\frac{1}{b}}=t^{-\beta}$ to get
\begin{equation*}
\begin{split}
U_M(x,t)&=\lambda_0^NU_M(\lambda_0 x,\lambda_0^bt)=t^{-\frac{N}{b}}U_M( x t^{-\frac{1}{b}},1)\\
&=t^{-\frac{N}{b}}U_M( x t^{-\frac{1}{b}},1)=t^{-\alpha}U_M( x t^{-\beta},1).
\end{split}
\end{equation*}
\end{proof}

\begin{remark}
Note that, in identity \eqref{eq:rescaled}, the fact the full sequence $\ula$ converges plays a crucial role. If we do not have this property, we cannot ensure that the sequence in $\{\lambda_0\lambda_j\}_{j=1}^\infty$ gives a convergent $u_{\lambda_0 \lambda_j}$.
\end{remark}

\subsubsection{Proof of Theorem \ref{thm:main}}
We have that
\begin{equation*}
\|\ula-U_M\|_{L^p(\R^N \times [t_1,t_2])}\to 0 \quad \textup{as} \quad \lambda\to \infty,
\end{equation*}
which in particular implies
\begin{equation*}
\|\ula(\cdot,t)-U_M(\cdot,t)\|_{L^p(\R^N)}\to 0 \quad \textup{as} \quad \lambda\to \infty \quad \textup{for a.e} \quad t\in[t_1,t_2].
\end{equation*}
Without loos of generality, assume the above limit holds for $t=1$. Then, choosing $\tau=\lambda^b=\lambda^{1/\beta}$,
\begin{equation*}
\begin{split}
\|\ula(\cdot,1)-U_M(\cdot,1)\|_{L^p(\R^N)}&=\lambda^N\| u(\lambda \cdot, \lambda^b)-U_M(\lambda \cdot, \lambda^b )\|_{L^p(\R^N)}\\
&=\lambda^{N-\frac{N}{p}}\| u( \cdot, \lambda^b)-U_M(\cdot, \lambda^b )\|_{L^p(\R^N)}\\
&=\tau^{N(1-\frac{1}{p})\beta}\|u( \cdot, \tau)-U_M(\cdot, \tau )\|_{L^p(\R^N)}.
\end{split}
\end{equation*}
Since $\tau \to \infty$ as $\lambda\to\infty$, we conclude that
\begin{equation*}
\tau^{\frac{N(1-\frac{1}{p})}{(m-1)N+2-2s}}\|u( \cdot, \tau)-U_M(\cdot, \tau )\|_{L^p(\R^N)}\to0 \quad \textup{as} \quad \tau \to \infty.
\end{equation*}

\section{Comments and open problems}

As a summary,  

 \noindent  $\bullet$ We establish the theory of existence of suitable weak solutions of problem (M1) and settle the question of finite vs infinite speed of propagation depending on the parameter $m$. We also settle the asymptotic behavior in one dimension by means of an integrated version of the problem.

 \noindent $\bullet$ The questions of uniqueness in several dimensions are widely open and ought to be addressed. Once this result is available, the existence of selfsimilar solutions together with the asymptotic behaviour would follow with the techniques showed in this paper.

If $m\in(1,2), \ N\ge 1$ we have uniqueness by the 1-to-1 correspondence of self-similar solutions between \eqref{eq:maineq} and \eqref{FPME}, and the last ones are known to be unique. However, we need a uniqueness result regarding general initial data in order to be able to prove the asymptotic behaviour in dimension higher than one.

If $m\in(1,\infty)$ and $N=1$, the solutions (not only self-similar ones) are unique since there exists a 1-to-1 correspondence with viscosity solutions of the integrated problem, which are known to be unique.

 \noindent $\bullet$ Another pending issue is continuity of weak solutions. In the case $m=2$ H\"older continuity is proved in  \cite{CSV, CVf2}.

 \noindent $\bullet$  Recently, the problem posed in a bounded domain was considered in \cite{NguyenVaz} for dimension $N\ge 1$. Further work is to be done on that issue.

\noindent $\bullet$ Satisfying numerical experiments have been performed, see \cite{StTeVa17} for some numerical experiments using ideas of \cite{Te14}. A systematic and rigorous numerical analysis is still pending.

\medskip

{\sc Acknowledgments. } J.L.V. is partially supported by Spanish Project MTM2014-52240-P.
D. Stan was partially supported by the MEC-Juan de la Cierva postdoctoral fellowship number FJCI-2015-25797 and by the projects ERCEA Advanced Grant
2014 669689 - HADE, by the MINECO project MTM2014-53850-P, by Basque Government project IT-641-13 and also by the Basque Government through the BERC 2014-2017 program and by Spanish Ministry of Economy and Competitiveness MINECO: BCAM Severo Ochoa excellence accreditation SEV-2013-0323.
 F.d.T. is partially supported by the Toppforsk (research excellence) project Waves and Nonlinear Phenomena (WaNP), grant no. 250070 from the Research Council of Norway and by the ERCIM ``Alain Bensoussan'' Fellowship programme.







\end{document}